\newcommand{\bC}{{\mathbb C}}
 \newcommand{\cD}{{\mathcal D}}
\newcommand{\cW}{{\mathcal W}} \newcommand{\tW}{\tilde{\mathcal W}}
\newcommand{\Mbar}{\overline{\mathcal M}}
\newcommand{\pd}{\partial}
\newcommand{\half}{\frac{1}{2}}
\newcommand{\cor}[1]{\langle {#1} \rangle}
\DeclareMathOperator{\res}{Res}
\newtheorem{theorem}{Theorem}[section]
\newtheorem{lemma}[theorem]{Lemma}
\theoremstyle{remark}
\newtheorem{remark}{Remark}[section]
\theoremstyle{definition}
\newcommand{\bea}{\begin{eqnarray}}
\newcommand{\eea}{\end{eqnarray}}
\newcommand{\ben}{\begin{eqnarray*}}
\newcommand{\een}{\end{eqnarray*}}
\newcommand{\be}{\begin{equation}}
\newcommand{\ee}{\end{equation}}
\begin{document}

\title[Intersection Numbers and Quantum Airy Curve]
{Intersection Numbers on Deligne-Mumford Moduli Spaces and Quantum Airy Curve}

\author{Jian Zhou}
\address{Department of Mathematical Sciences\\Tsinghua University\\Beijing, 100084, China}
\email{jzhou@math.tsinghua.edu.cn}

\begin{abstract}
We establish the Airy curve case of a conjecture of Gukov and Su{\l}kowski
by reducing to Dijkgraaf-Verlinde-Verlinde Virasoro constraints
satisfied by the intersection numbers on moduli spaces of algebraic curves.
\end{abstract}

\maketitle

{\bf Key words.} Intersection numbers, moduli spaces of curves,
Eynard-Orantin topological recursion.

 {\bf MSC 2000.} Primary 14N35.  Secondary 53D45.
\section{Introduction}

In this paper we will establish the Airy curve case
of a conjecture of Gukov and Su{\l}kowski \cite{Guk-Sul}.
By the Airy curve we mean the plane algebraic curve defined by the following equation:
\be
A(u,v) = \frac{1}{2} v^2 - u = 0.
\ee
(This differs from the form used in \cite{Guk-Sul} by a factor of $2$.)
This curve has the following parametrization:
\bea
&& u(p) = \half p^2, \label{eqn:u(p)} \\
&& v(p) = p. \label{eqn:v(p)}
\eea
By the Eynard-Orantin recursion \cite{Eyn-OraInv},
one can define from this curve a family of differentials:
\be
W_{g,n}(p_1, \dots, p_n) = \cW_{g,n}(p_1, \dots, p_n)  dp_1 \cdots dp_n.
\ee
Motivated by the matrix model origin of this construction,
or a definition of the Baker-Akhiezer function in \cite{Eyn-OraInv},
Gukov and Su{\l}kowski \cite{Guk-Sul} define
\be
Z = \exp \sum_{n =0}^\infty \hbar^{n-1} S_n,
\ee
where $S_n$ are defined by:
\bea
&& S_0 = \int^p v(p) du(p), \\
&& S_1(p) = - \half \log \frac{du}{dp}, \\
&& S_n(p) = \sum_{2g-1+k} \frac{(-1)^k}{k!}
\int^p \cdots \int^p W_{g,k}(p_1', \dots, p_k') dp_1'\cdots dp_k', \;\; n \geq 2.
\eea
We use a different sign convention from that in \cite{Guk-Sul}.
We will prove the following:

\begin{theorem} \label{thm:Main}
The function $Z$ satisfies the following differential equation:
\be
\hat{A}(u,v) Z = 0,
\ee
where
\be
\hat{A} = \half \hat{v}^2 - \hat{u}
\ee
is the quantization of the polynomial $A(u,v)$,
where $\hat{u} = u \cdot$, $\hat{v} = \hbar \pd_u$.
\end{theorem}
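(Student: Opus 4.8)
The plan is to reduce the assertion to the Dijkgraaf--Verlinde--Verlinde (Virasoro) constraints for the Witten--Kontsevich intersection numbers. First I would invoke the standard identification of the Airy-curve correlators with generating functions of $\psi$-class intersection numbers: for $2g-2+n>0$,
\be
\cW_{g,n}(p_1,\dots,p_n) = (-1)^n\sum_{d_1,\dots,d_n\ge 0}\cor{\tau_{d_1}\cdots\tau_{d_n}}_g\,\prod_{i=1}^n\frac{(2d_i+1)!!}{p_i^{2d_i+2}}
\ee
(in the normalization dictated by the sign conventions of this paper); this can be quoted from \cite{Eyn-OraInv}, or re-derived by checking that under \eqref{eqn:u(p)}--\eqref{eqn:v(p)} the Eynard--Orantin recursion kernel degenerates to the combinatorial kernel governing the DVV recursion. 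Since $\int^{p}(2d+1)!!\,{p'}^{-2d-2}\,dp' = -(2d-1)!!\,p^{-2d-1}$ for every $d\ge 0$, substituting this into the definition of $S_n$ and matching powers of $\hbar$ (note $\hbar^{n-1}=\hbar^{2g-2+k}$ when $2g-1+k=n$) yields
\be
Z = \exp\!\Big(\tfrac1\hbar S_0 + S_1\Big)\cdot\tau\big(\bt(p)\big),\qquad t_d(p)=-\hbar\,(2d-1)!!\,p^{-2d-1},
\ee
where $\tau(t_0,t_1,\dots)=\exp\sum_{2g-2+k>0}\tfrac{\hbar^{2g-2}}{k!}\sum_{d_1,\dots,d_k\ge 0}\cor{\tau_{d_1}\cdots\tau_{d_k}}_g\prod_{i}t_{d_i}$ is the Witten--Kontsevich partition function; the prefactor carries the unstable contributions, with $S_0=p^{3}/3=\tfrac13(2u)^{3/2}$ and $e^{S_1}=p^{-1/2}=(2u)^{-1/4}$.

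The heart of the argument is to translate $\hat A Z=0$ into a statement about $\tau$. Writing $W=\tfrac1\hbar S_0+S_1$ and using $\pd_u=p^{-1}\pd_p$ together with $\pd_p t_d=\hbar(2d+1)!!\,p^{-2d-2}$, conjugation by the prefactor gives
\be
\hat A\big(e^{W}\tau\big)=e^{W}\Big(\tfrac{\hbar^2}{2}\big((W')^2+W''+2W'\pd_u+\pd_u^2\big)-u\Big)\tau,
\ee
where $\pd_u$ now acts on $\tau$ through the $t_d$ by the chain rule. Because $S_0'=v$ and $S_1'=-\tfrac12(\log v)'$, the scalar term $\tfrac{\hbar^2}{2}\big((W')^2+W''\big)-u$ has vanishing $\hbar^{-2}$ and $\hbar^{-1}$ parts; expressing the remaining operator in the $t_d$, one finds that it equals a (possibly infinite) linear combination $\sum_{n\ge-1}c_n(p)\,L_n$ of the DVV Virasoro generators $L_n$, with $c_n(p)$ explicit monomials in $p$ (and $\hbar$), where the dilaton shift $t_1\mapsto t_1-1$ built into the $L_n$ is reproduced by the $W''$ term and the constant and quadratic anomaly terms of $L_0$ and $L_{-1}$ are reproduced by the residual scalar contributions of $S_1$ and $S_0$. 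Since $L_n\tau=0$ for all $n\ge-1$, this gives $\hat A Z=0$.

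I expect the main obstacle to be exactly this last matching: one must verify, by a careful bookkeeping of the double factorials and powers of $\hbar$ generated when $\tfrac{\hbar^2}{2}\pd_u^2$ is pulled through the Miwa substitution $t_d=t_d(p)$, that the resulting operator assembles precisely into the quadratic-plus-linear-plus-scalar structure of $\sum_n c_n(p)L_n$, with no leftover; the sum over $n$ causes no analytic difficulty since, after expanding in $\hbar$, only finitely many Virasoro operators contribute at each genus. As a consistency check external to intersection theory, one may observe that $S_0$, $S_1$ and the higher $S_n$ reassemble into the classical asymptotic expansion of $\mathrm{Ai}\big((2/\hbar^2)^{1/3}u\big)$, which satisfies $\hat A\,\mathrm{Ai}=0$ by the very definition of the Airy function; but the DVV route is the one that stays within the topological-recursion framework of \cite{Guk-Sul}.
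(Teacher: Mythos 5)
Your route is genuinely different from the paper's. The paper expands $\hat A Z=0$ order by order in $\hbar$ into a Riccati-type hierarchy for the $\pd_u S_n$, checks the first three orders by hand, and then, after the change of variable $w=1/(2u)$, identifies the remaining equations with the diagonal restriction $w_0=\dots=w_{k-1}=w$ of a recursion for the antiderivatives $\Omega_{g,k}$, which is itself an integrated, polynomial reformulation of the DVV recursion. You instead keep the full Witten--Kontsevich tau function, write $Z=e^{W}\tau(\bt(p))$ via a Miwa-type substitution, conjugate $\hat A$ by the prefactor, and try to exhibit $e^{-W}\hat A e^{W}$ (acting through the substitution) as a combination $\sum_{n\ge -1}c_n(p)L_n$ of Virasoro generators. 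Both ultimately rest on DVV, and your version, if completed, would be a clean ``principal specialization of Virasoro constraints'' argument; but it trades the paper's elementary (if lengthy) bookkeeping of diagonal restrictions for a nontrivial operator identity.

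The gap is that this operator identity --- the entire content of the theorem on your route --- is asserted rather than proved. You yourself flag the matching of $e^{-W}\hat A e^{W}$ with $\sum_n c_n(p)L_n$ ``with no leftover'' as the main obstacle, so as written this is a strategy, not a proof: one must actually compute the quadratic part $\sum_{d,e}(\pd_u t_d)(\pd_u t_e)\pd_{t_d}\pd_{t_e}$, the linear part coming from $\sum_d(\pd_u^2 t_d)\pd_{t_d}$ and from $2W'\pd_u$, and the residual scalars, and check coefficient by coefficient that they assemble into the $\sum_{a+b=n-1}\pd_{t_a}\pd_{t_b}$, $\sum_a(2a+1)(t_a-\delta_{a,1})\pd_{t_{a+n}}$, and anomaly terms of the $L_n$. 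One concrete symptom that this has not been carried out: you attribute the dilaton shift $t_1\mapsto t_1-1$ to the $W''$ term, but $W''$ is a scalar and cannot produce a first-order operator in the $t_d$; the shift must come from the cross term $\hbar^2 W'\pd_u=\hbar\,p\,\pd_u+\cdots$ acting through the Miwa variables. There is also a sign mismatch with the paper's Theorem 2.2, which gives $\cW_{g,n}=\tW_{g,n}$ with no factor $(-1)^n$; your signs must be made consistent with the choice of branch $z=\pm u^{1/2}$ before the coefficients $c_n(p)$ can be pinned down. Until the Virasoro matching is verified explicitly (it is true, and is essentially Kontsevich's $1\times 1$ matrix Airy function statement, but it is not automatic), the argument is incomplete.
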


This is a special case of the general conjecture made
by Gukov-Su{\l}kowski \cite{Guk-Sul}
on quantizable algebraic curves.
We prove this result by reducing to the Dijikgraaf-Verlinde-Verlinde
recursion relation \cite{DVV}.
In a subsequent work \cite{Zho2},
we will treat the case of the local mirror curve for
$\bC^3$ and the resolved conifold.

In \cite[\S 10]{Eyn-OraRec},
Eynard and Orantin claimed that
for the curve
\bea
&& x(z) = z^2, \\
&& y(z) = z - \frac{1}{2} \sum_{k=0}^\infty t_{k+2} z^k,
\eea
the differentials $W_{g,n}(z_1,\dots, z_n)$
encodes higher Weil-Petersson volumes,
and one can also construct $F_g$ that encodes
intersection numbers $\cor{\tau_{d_1} \dots \tau_{d_n}}_g$.
See also \cite[\S 10.4.1]{Eyn-OraInv}.
In \cite[\S 10]{Eyn-OraInv},
the Eynard-Orantin recursion for the Airy curve was discussed
and the relationship between $W_{g,n}$ the Tracy-Widom kernel
was recalled.
It was also stated that ``The fact that the Baker-Akhiezer function is $Ai(x)$ and satisfies
the differential equation $Ai''=xAi$ can be seen as a consequence of the Hirota equation theorem
9.2."
The  authors of \cite{Ben-Coc-Saf-Wos}
established the equivalence of the Eynard-Orantin recursion of the Airy curve
to the DVV recursion relations via the Laplace transform of a recursion relation for
the symplectic volumes of the moduli spaces.
In \cite{Zho},
the author established
the equivalence of the DVV recursion relations to a Eynard-Orantin type recursions,
but unfortunately,
the starting point of that paper was to rewrite the DVV recursion relation using residues
and a kernel function,
not necessarily on an algebraic curve,
so the relationship with the Airy curve was missed.
After the author came across \cite{Ben-Coc-Saf-Wos} in June 2012 on the internet,
it became clear that it is possible to combine the ideas in \cite{Ben-Coc-Saf-Wos} and \cite{Zho}
to directly establish the equivalence between the DVV relations and the Eynard-Orantin
recursion for the Airy curve.
A consequence of this result and Theorem 1.1 is that
one can then relate the intersection numbers
on $\Mbar_{g,n}$ to the Airy functions $Ai(x)$ and $Bi(x)$.

The rest of the paper is arranged as follows.
In \S 2 we recall the Eynard-Orantin recursion for the Airy curve
and present a direct proof that it is equivalent to the DVV recursion.
We then change coordinate in \S 3 and combine an observation
in an earlier work \cite{Zho} to derive a simple recursion relation
for a suitably defined $n$-point polynomial functions
$$\omega_{g,n} = \sum_{a_1, \dots, a_n \geq 0}
\cor{\tau_{a_1} \cdots \tau_{a_n}}_g \prod_{i=1}^n (2a_i+1)!!w_i^{a_i+1}$$
of intersection numbers on the Deligne-Mumford moduli spaces.
It has a very simple form:
\ben
&& \omega_{g,n+1}(w_0,w_1, \dots, w_n)  \\
& = & \frac{1}{2} w_0 \omega_{g-1,n+2}(w_0,w_0, w_{[n]}) \\
& + & \frac{1}{2} w_0
\sum^s_{\substack{g_1+g_2=g \\ A_1 \coprod A_2 = [n]}}
\omega_{g_1, |A_1|+1}(w_0,w_{A_1}) \cdot \omega_{g_2, |A_2|+1}(w_0, w_{A_2}) \\
& + &  \sum_{i=1}^n D_{w_0,w_i} \omega_{g, n}(x, w_{[n]_i}),
\een
where
\ben
&& D_{u,v}x^m
= uv (u^m + 3 u^{m-1}v + 5u^{m-2}v^2 + \cdots +(2m+1)v^m).
\een
In \S 4  we consider the antiderivatives $\Omega_{g,n}$ of $\omega_{g,n}$:
\ben
\Omega_{g,n}(w_1, \dots, w_n)
= \sum_{a_1, \dots, a_n \geq 0}
\cor{\tau_{a_1} \cdots \tau_{a_n}}_g \prod_{i=1}^n (2a_i-1)!! w_i^{a_i+1/2}
\een
and integrate the above recursion relation  to get a recursion relation
\ben
&& \pd_{w_0} \Omega_{g,n+1}(w_0,w_1, \dots, w_n) \\
& = & w_0^{5/2} \pd_x\pd_y \Omega_{g-1,n+2}(x,y,w_{[n]})|_{x=y=w_0} \\
& + & w_0^{5/2} \sum^s_{\substack{g_1+g_2=g \\ A_1 \coprod A_2 = [n]}}
\pd_{w_0} \Omega_{g_1, |A_1|+1}(w_0,w_{A_1}) \cdot \pd_{w_0} \Omega_{g_2, |A_2|+1}(w_0, w_{A_2}) \\
& + & w_0^{-3/2} \sum_{i=1}^n \cD_{w_0,w_i} \pd_x \Omega_{g,n}(x, w_{[n]_i}),
\een
where $\cD_{u,v}: \bC[x] x^{-1/2} \to \bC[u,v]uv^{1/2}$
is a linear operator defined by:
\ben
&& \cD_{u,v} x^{a-1/2} = uv^{1/2}(u^{a+1} + u^av + \cdots + v^{a+1}).
\een
In \S 5 we use this result in \S 4 to
present a proof of Theorem 1.1.

\section{Eynard-Orantin Topological Recursion on Airy Curve
and Moduli Spaces of Curves}

\subsection{Eynard-Orantin recursion for the Airy curve}
\label{sec:EORecursion}

In this section,
we recall the construction of Eynard-Orantin \cite[\S 4]{Eyn-OraInv}
for the case of the Airy curve.
Near the branch point $(u,v) = 0$,
the conjugate point of $(u(p), v(p))$ is $(u(p), -v(p))$,
i.e. $\bar{p} = - p$.
Hence the vertex is given by:
\be
\omega = (v(\bar{p}) - v(p)) d u(p) = ((-p)-p) d \frac{p^2}{2}
= -2p^2dp.
\ee
Since the Airy curve has genus $0$,
under the parametrization \eqref{eqn:u(p)} and \eqref{eqn:v(p)},
the line-propagator (Bergmann kernel) on the Airy curve is given by
\cite[\S 3.2]{Eyn-OraInv}:
\be
B(p_1,p_2) = \frac{dp_1dp_2}{(p_1-p_2)^2}.
\ee
The arrow-propagator is
\be
dE_q(p) = \frac{1}{2} \int_q^{\bar{q}} B(\xi, p)
= \half dp \int_q^{-q} \frac{d\xi}{(p-\xi)^2} = \frac{qdp}{q^2-p^2}.
\ee
Hence the recursion kernel is:
\be
K(q,p) = \frac{dE_q(p)}{\omega(q)}
= \frac{dp}{q(q^2-p^2) dq}.
\ee
The Eynard-Orantin recursion has as initial values:
\bea
&& W_{0,1}(p) = 0, \label{eqn:Airy01} \\
&& W_{0,2}(p_1,p_2) = B(p_1,p_2) = \frac{dp_1dp_2}{(p_1-p_2)^2}, \label{eqn:Airy02}
\eea
and in general:
\be \label{eqn:EO1}
\begin{split}
& W_{g,n+1}(z_0,z_1, \dots, z_n)  \\
 = & \half
\res_{z=0} \biggl( K(z,z_0) \cdot
\biggl( W_{g-1,n+2}(z,-z, z_{[n]}) \\
+ & \sum_{\substack{g_1+g_2=g \\ A_1 \coprod A_2 = [n]}}
W_{g_1, |A_1|+1}(z,z_{A_1}) \cdot W_{g_2, |A_2|+1}(-z, z_{A_2})
\biggr) \biggr).
\end{split}
\ee
Here we have used the following notations:
For $A =\subset [n]$,
when $A \emptyset$, $z_A$ is empty;
otherwise, if $A=\{i_1, \dots, i_k$,
then $z_{A} = z_{i_1}, \dots, z_{i_k}$.
In terms of $\cW_{g,n}(z_1, \dots, z_n)$,
one has
\be \label{eqn:EO2}
\begin{split}
& \cW_{g,n+1}(z_0,z_1, \dots, z_n)  \\
 = & \half
\res_{z=0} \biggl( \frac{1}{z(z_0^2-z^2)} \cdot
\biggl( \cW_{g-1,n+2}(z,-z, z_{[n]}) \\
+ & \sum_{\substack{g_1+g_2=g \\ A_1 \coprod A_2 = [n]}}
\cW_{g_1, |A_1|+1}(z,z_{A_1}) \cdot \cW_{g_2, |A_2|+1}(-z, z_{A_2})
\biggr) \biggr).
\end{split}
\ee

Here are some examples obtained by applying \eqref{eqn:EO2}.
\ben
&& \cW_{0,3}(z_0,z_1,z_2) \\
& = & \res_{z=0}(\frac{1}{z(z_0^2-z^2)} \cW_{0,2}(z,z_1)\cW_{0,2}(z,z_2)) \\
& = & \res_{z=0}\biggl(\frac{1}{z(z_0^2-z^2)} \cdot \frac{1}{(z-z_2)^2}
\cdot \frac{1}{(z-z_2)^2} \biggr)\\
& = &\frac{1}{z_0^2z_1^2z_2^2}.
\een
\ben
&& \cW_{0,4}(z_0,z_1,z_2,z_3) \\
& = & \res_{z=0}(\frac{1}{z(z_0^2-z^2)}
\cdot (\cW_{0,2}(z,z_1)\cW_{0,3}(z,z_2,z_3) \\
& + & \cW_{0,2}(z,z_2)\cW_{0,3}(z,z_1,z_3)
+ \cW_{0,2}(z,z_3) \cW_{0,3}(z,z_1,z_2) ) \\
& = & \res_{z=0} \biggl( \frac{1}{z(z_0^2-z^2)} \cdot
\biggl(\frac{1}{(z-z_1)^2} \cdot \frac{1}{z^2z_2^2z_3^2} \\
& + & \frac{1}{(z-z_2)^2} \cdot \frac{1}{z^2z_1^2z_3^2}
+ \frac{1}{(z-z_3)^2} \cdot \frac{1}{z^2z_1^2z_2^2} \biggr) \biggr).
\een
Because
\ben
\res_{z=0} \frac{1}{z^3(u^2-z^2)(z-v)^2}
= \frac{3}{u^2v^4} + \frac{1}{u^4v^2},
\een
we have
\ben
\cW_{0,4}(z_0,z_1,z_2,z_3)
= \frac{1}{z_0^2z_1^2z_2^2z_3^2} \sum_{i=0}^3 \frac{3}{z_i^2}.
\een
When $(g,n)=(1,2)$ we have
\ben
&& \cW_{1,2}(z_0,z_1) \\
& = & \res_{z=0} (\frac{1}{z(z_0^2-z^2)}
(\half \cW_{0,3}(z,z,z_1) + \cW_{0,2}(z,z_1) \cW_{1,1}(z) ) ) \\
& = & \res_{z=0} \biggl(\frac{1}{z(z_0^2-z^2)} \cdot
\biggl(\half \cdot \frac{1}{z^4z_1^2} + \frac{1}{(z-z_1)^2} \cdot \frac{1}{8z^4} \biggr) \biggr) \\
& = & \half \cdot \frac{1}{z_0^6z_1^2}
+ \frac{1}{8} \biggl(\frac{5}{z_0^2z_1^6} + \frac{3}{z_0^4z_1^4} + \frac{1}{z_0^6z_1^2} \biggr) \\
& = & \frac{5}{8}\cdot \frac{1}{z_0^2z_1^6} + \frac{3}{8}\cdot \frac{1}{z_0^4z_1^4}
+ \frac{5}{8} \cdot \frac{1}{z_0^6z_1^2}.
\een

\subsection{Relationship with intersection numbers on moduli spaces of curves}
\label{sec:DVV}
Consider the intersection numbers on Deligne-Mumford moduli spaces:
\be
\cor{\tau_{a_1} \cdots \tau_{a_n}}_g: = \int_{\Mbar_{g,n}}
\psi_1^{a_1} \cdots \psi_n^{a_n}
\ee
and the following generating function
\be \label{Def:tW}
\tW_{g,n}(z_1, \dots, z_n)
= \sum_{a_1, \dots, a_n \geq 0}
\cor{\tau_{a_1} \cdots \tau_{a_n}}_g \prod_{i=1}^n \frac{(2a_i+1)!!}{z_i^{2a_i+2}}.
\ee
By evaluating the correlators by Witten-Kontsevich Theorem \cite{Wit, Kon},
one can find explicit expressions of $\tW_{g,n}$ for small $g$ and $n$.
For example,
\ben
&& \tW_{0,3}(z_1,z_2,z_3) = \frac{1}{z_1^2z_2^2z_3^2}, \label{eqn:W(0,3)} \\
&& \tW_{0,4}(z_1,z_2,z_3,z_4)
= \frac{1}{z_1^2z_2^2z_3^2z_4^2} \sum_{i=1}^4 \frac{3}{z_i^2}, \label{eqn:W(0,4)} \\
&& \tW_{0,5}(z_1,\dots,,z_5)
= \frac{1}{z_1^2\cdots z_5^2}
(\sum_{i=1}^5 \frac{15}{z_i^4}
+ \sum_{1 \leq i< j \leq 5} \frac{18}{z_i^2z_j^2} ), \label{eqn:W(0,5)} \\
&& \tW_{0,6}(z_1,\dots,,z_6)
= \frac{1}{\prod_{i=1}^6 z_i^2}
(\sum_{i=1}^6 \frac{105 }{z_i^6}
+ \sum_{1 \leq i \neq j \leq 6} \frac{135}{z_i^4z_j^2}
+ \sum_{1 \leq i < j < k \leq 6} \frac{162}{z_i^2z_j^2z_k^2}), \label{eqn:W(0,6)} \\
\een

\ben
&& \tW_{1,1}(z_1) = \frac{1}{8z_1^4}, \label{eqn:W(1,1)} \\
&& \tW_{1,2}(z_1,z_2)
= \frac{1}{8z_1^2z_2^2}
\biggl(\frac{5}{z_1^4}+ \frac{5}{z_2^4} + \frac{3}{z_1^2z_2^2} \biggr), \label{eqn:W(1,2)} \\
&& \tW_{1,3}(z_1,z_2,z_3) = \frac{1}{8z_1^2z_2^2z_3^2}\cdot
\biggl( \sum_{i=1}^3 \frac{35}{z_i^6}
+ \sum_{1  \leq i \neq j \leq 3} \frac{30}{z_i^4z_j^2}
+ \frac{18}{z_1^2z_2^2z_3^2}\biggr), \label{eqn:W(1,3)}
\een
\ben
\tW_{2,1}(z_1) & = & \frac{105}{128} \frac{1}{z_1^{10}}, \\
\tW_{2,2}(z_1,z_2) & = & \frac{1155}{128} \frac{1}{z_1^{12}z_2^2}
+ \frac{3465}{128} \frac{1}{z_1^{10}z_2^4} + \frac{6699}{128}\frac{1}{z_1^8z_2^6}
+ \frac{6699}{128}\frac{1}{z_1^6z_2^8} \\
&& + \frac{3465}{128} \frac{1}{z_1^{4}z_2^{10}} + \frac{1155}{128} \frac{1}{z_1^{2}z_2^{12}}
\een
etc.

\begin{theorem}
When $2g-2+n > 0$,
one has
\be
\cW_{g,n}(z_1, \dots, z_n) = \tW_{g,n}(z_1, \dots, z_n).
\ee
Indeed,
$\tW_{g,n}$'s satisfy the initial values and Eynard-Orantin recursion relations
\eqref{eqn:Airy01}-\eqref{eqn:EO1}.
\end{theorem}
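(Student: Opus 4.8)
The plan is to prove the sharper second assertion --- that the $\tW_{g,n}$ satisfy the initial values and the recursion relations \eqref{eqn:Airy01}--\eqref{eqn:EO1} --- since the equality $\cW_{g,n}=\tW_{g,n}$ for $2g-2+n>0$ follows from it at once. Extend the family by declaring $\tW_{0,1}:=0$ and $\tW_{0,2}(z_1,z_2):=B(z_1,z_2)=1/(z_1-z_2)^2$; I will check that the $\tW_{g,n}$ then satisfy \eqref{eqn:EO1} for every stable $(g,n)$. Because \eqref{eqn:EO1} together with \eqref{eqn:Airy01}--\eqref{eqn:Airy02} determines all the $W_{g,n}$ by an evident induction on the complexity $2g-2+n$ (each term on the right involves $W$'s of strictly smaller complexity, the vanishing of $W_{0,1}$ disposing of the would-be exceptions), this yields the theorem. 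The only external ingredient is the Dijkgraaf--Verlinde--Verlinde recursion \cite{DVV} for the descendant intersection numbers, which in the normalization matched to \eqref{Def:tW} takes the form
\begin{equation*}
\begin{split}
(2c+1)!!\,\cor{\tau_c\tau_{d_1}\cdots\tau_{d_n}}_g
&= \sum_{j=1}^n \frac{(2c+2d_j-1)!!}{(2d_j-1)!!}\,\cor{\tau_{c+d_j-1}\prod\nolimits_{i\neq j}\tau_{d_i}}_g \\
&\quad + \frac12\sum_{a+b=c-2}(2a+1)!!(2b+1)!!\,\cor{\tau_a\tau_b\prod\nolimits_{i}\tau_{d_i}}_{g-1} \\
&\quad + \frac12\sum_{a+b=c-2}(2a+1)!!(2b+1)!!
\sum^s_{\substack{g_1+g_2=g\\ A_1\coprod A_2=[n]}}\cor{\tau_a\prod\nolimits_{i\in A_1}\tau_{d_i}}_{g_1}\cor{\tau_b\prod\nolimits_{j\in A_2}\tau_{d_j}}_{g_2},
\end{split}
\end{equation*}
valid whenever $2g-1+n\ge 2$; supplemented by the two seeds $\cor{\tau_0^3}_0=1$ and $\cor{\tau_1}_1=\tfrac1{24}$ it fixes all the intersection numbers, and the $\tW_{g,n}$ recorded above are their generating functions by the Witten--Kontsevich theorem \cite{Wit,Kon}.

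Next I would dispose of the two stable pairs of complexity $1$ directly. For $(g,n)=(0,3)$ the right side of \eqref{eqn:EO2} involves only $\tW_{0,2}$, and the residue already computed there equals $1/(z_1^2z_2^2z_3^2)=\tW_{0,3}$. For $(g,n)=(1,1)$ the sole contribution is $\tfrac12\res_{z=0}\tfrac{1}{z(z_0^2-z^2)}\,\tW_{0,2}(z,-z)$ with $\tW_{0,2}(z,-z)=1/(4z^2)$; since $\res_{z=0}\tfrac{1}{z^3(z_0^2-z^2)}=1/z_0^4$, this produces $\tfrac12\cdot\tfrac14\cdot z_0^{-4}=1/(8z_0^4)=\tW_{1,1}$, consistent with $\cor{\tau_1}_1=\tfrac1{24}$. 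These are the only two pairs at which the genuinely unstable input $\tW_{0,2}(z,\pm z)$ enters in the ``diagonal'' slot of the recursion.

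For complexity $2g-1+n\ge 2$, the plan is to substitute the explicit Laurent expansions at $z=0$ into the right side of \eqref{eqn:EO2}, extract the coefficient of a fixed monomial $z_0^{-2c-2}\prod_{l=1}^n z_l^{-2d_l-2}$, and match it against the coefficient $(2c+1)!!\cor{\tau_c\prod_l\tau_{d_l}}_g\prod_l(2d_l+1)!!$ of the same monomial in $\tW_{g,n+1}$. The only residue fact required is $\res_{z=0}\frac{z^{2k}}{z(z_0^2-z^2)}=z_0^{2k-2}$ for $k\le 0$ and $=0$ for $k\ge 1$, which follows from $\frac{1}{z_0^2-z^2}=\sum_{j\ge0}z^{2j}z_0^{-2j-2}$. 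One splits the sum in \eqref{eqn:EO1} into three groups: \textup{(i)} the term $\tW_{g-1,n+2}(z,-z,z_{[n]})$, whose residue reproduces the $\cor{\,\cdot\,}_{g-1}$ line of the DVV recursion; \textup{(ii)} the splittings in which both factors are stable, reproducing its last line; and \textup{(iii)} the splittings in which one factor equals $\tW_{0,2}$, namely $\sum_{i=1}^n\big[\tW_{0,2}(z,z_i)\tW_{g,n}(-z,z_{[n]\setminus i})+\tW_{0,2}(-z,z_i)\tW_{g,n}(z,z_{[n]\setminus i})\big]$. Using that the stable $\tW_{g,n}$ contain only even powers of each variable (so that $\tW_{g,n}(-z,\cdot)=\tW_{g,n}(z,\cdot)$ and $(-z)^{2a+2}=z^{2a+2}$), group \textup{(iii)} collapses to $\sum_i\big(\tfrac{1}{(z-z_i)^2}+\tfrac{1}{(z+z_i)^2}\big)\tW_{g,n}(z,z_{[n]\setminus i})$, and the decisive computation is the expansion $\tfrac{1}{(z-z_i)^2}+\tfrac{1}{(z+z_i)^2}=2\sum_{m\ge0}(2m+1)z^{2m}z_i^{-2m-2}$, which together with the identity $\frac{2d_i+1}{(2d_i+1)!!}=\frac{1}{(2d_i-1)!!}$ turns group \textup{(iii)} into exactly the first line of the DVV recursion. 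Cancelling the common factor $\prod_l(2d_l+1)!!$ from both sides then identifies \eqref{eqn:EO1} for $\tW_{g,n+1}$ with the DVV recursion, finishing the proof. I expect the main obstacle to be not any single step but the coefficient bookkeeping here: keeping the parity conventions straight, carrying the double-factorial normalizations faithfully through each residue, and including the unstable pieces $\tW_{0,1}$ and $\tW_{0,2}$ in groups \textup{(i)--(iii)} exactly where the Eynard--Orantin sum does --- in particular recognising that the $\tW_{0,1}$ contributions drop out and that it is precisely the $\tW_{0,2}(z,z_i)$ contributions that rebuild the descendant-merging line of DVV, which has no direct counterpart on the right side of \eqref{eqn:EO1}.
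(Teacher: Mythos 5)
Your proposal is correct and takes essentially the same route as the paper: both prove the theorem by identifying the Eynard--Orantin residue recursion for the generating functions $\tW_{g,n}$ with the DVV relations, using the expansion of $1/(z\mp z_i)^2$ and of the kernel $1/(z(z_0^2-z^2))$ to extract coefficients (the paper runs the computation from DVV to the residue form, you run it in the reverse direction, which is the same calculation). Your treatment is somewhat more explicit about the induction on $2g-2+n$, the base cases $(0,3)$ and $(1,1)$, and the role of the unstable terms $\tW_{0,1}$, $\tW_{0,2}$, which the paper dispatches tersely.
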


As mentioned in the Introduction,
this result was due to \cite{Ben-Coc-Saf-Wos}.
In that work,
the result was established via an equivalent recursion relations for the symplectic
volumes of the moduli spaces \cite[Theorem 1.1]{Ben-Coc-Saf-Wos}.
Here we present a direct proof.

\begin{proof}
Recall the DVV recursion relations \cite{DVV} are:
\be \label{eqn:DVV}
\begin{split}
\cor{\tilde{\tau}_{a_0} \prod_{i=1}^n \tilde{\tau}_{a_i}}_g
= & \sum_{i=1}^n (2a_i+1) \cor{\tilde{\tau}_{a_0+a_i-1} \prod_{j \in [n]_i}
\tilde{\tau}_{a_j} }_g \\
+ & \half \sum_{b_1+b_2=a_0-2} \biggl( \cor{\tilde{\tau}_{b_1}\tilde{\tau}_{b_2} \prod_{i=1}^n \tilde{\tau}_{a_i}}_{g-1} \\
+ & \sum^s_{\substack{A_1 \coprod A_2 = [n] \\g_1+g_2=g}}
\cor{\tilde{\tau}_{b_1} \prod_{i\in A_1} \tilde{\tau}_{a_i} }_{g_1}
\cdot \cor{\tilde{\tau}_{b_2} \prod_{i\in A_2} \tilde{\tau}_{a_i} }_{g_2} \biggr),
\end{split}
\ee
where $\tilde{\tau}_a =(2a+1)!! \cdot \tau_a$ and $[n]=\{1, \dots, n\}$,
$[n]_i = [n]-\{i\}$.
In the above formula,
$$ \sum^s_{\substack{A_1 \coprod A_2 = [n] \\g_1+g_2=g}}$$
means the summation is taken over the ``stable cases",
i.e.:
$$2g_1-1+|A_1| >0, \;\; 2g_2-1+|A_2|>0.$$
Multiply both sides of \eqref{eqn:DVV} by $\frac{1}{z_0^{2a_0+2}} \cdot \prod_{i=1}^n \frac{1}{z_i^{2a_i+2}}$
and take summations over $a_0, a_1, \dots, a_n$:
\ben
&& \tW_{g,n+1}(z_0,z_1, \dots, z_n) \\
& = & \sum_{a_0,a_1, \cdots, a_n \geq 0} \cor{\tau_{a_0}\tau_{a_1} \cdots \tau_{a_n}}_g
\frac{(2a_0+1)!!}{z_0^{2a_0+2}} \cdot \prod_{i=1}^n \frac{(2a_i+1)!!}{z_i^{2a_i+2}} \\
& = & \sum_{a_0,a_1, \cdots, a_n \geq 0} \sum_{i=1}^n (2a_i+1) \cdot (2a_0+2a_i-1)!! \\
&& \cdot \cor{\tau_{a_0+a_i-1} \prod_{j\in [n]_i} \tau_{a_j} }_g
\cdot \frac{1}{z_0^{2a_0+2}} \cdot \frac{1}{z_i^{2a_i+2}} \cdot \prod_{j \in [n]_i} \frac{(2a_j+1)!!}{z_j^{2a_j+2}} \\
& + & \half \sum_{b_1+ b_2= a_0-2} (2b_1+1)!!\cdot (2b_2+1)!!
\biggl( \cor{\tau_{b_1}\tau_{b_2} \prod_{i=1}^n \tau_{a_i}}_{g-1} \\
& + &  \sum^s_{\substack{g_1+g_2=g\\A_1 \coprod A_2}}
\cor{\tau_{b_1} \prod_{i\in A_1} \tau_{a_i} }_{g_1}
\cdot  \cor{\tau_{b_2} \prod_{i\in A_2} \tau_{a_i} }_{g_2} \biggr) \\
&&  \cdot
\frac{1}{z_0^{2b_1+2b_2+6}}\cdot \prod_{i=1}^n \frac{(2a_i+1)!!}{z_i^{2a_i+2}}.
\een
Notice that
\be
\res_{z=0} \biggl(\frac{1}{z(z_0^2-z^2)} \cdot \frac{1}{z^{2b_1+2}} \cdot \frac{1}{z^{2b_2+2}} \biggr)
= \frac{1}{z_0^{2b_1+2b_2+6}},
\ee
and
\be
\res_{z=0} \biggl( \frac{1}{z(z_0^2-z^2)} \cdot  \frac{1}{(z-z_i)^2} \cdot \frac{1}{z^{2m+2}} \biggr)
= \sum_{a_0 =0}^{m+1-a_0} \frac{(2a_i+1)}{z_0^{2a_0+2}z_i^{2a_i+2}},
\ee
so the above equality an be rewritten as follows:
\ben
&& \tW_{g,n+1}(z_0, z_1, \dots, z_n) \\
& = & \sum_{i=1}^n \res_{z=0} \biggl( \frac{1}{z(z_0-z^2)} \cdot \biggl( \frac{1}{(z-z_i)^2}
\cdot \tW_{g,n}(-z, [n]_i) \\
& + & \half \tW_{g-1,n+2}(z,-z, [n]) \\
& + &  \sum^s_{\substack{g_1+g_2=g\\A_1 \coprod A_2}}
\tW_{g_1,|A_1|+1}(z, z_{A_1}) \cdot \tW_{g_2, |A_2|+!}(-z, z_{A_2}) \biggr) \biggr) .
\een
The proof is completed by setting:
$$\tW_{0,2}(z_1,z_2) = \frac{1}{(z_1-z_2)^2}.$$
\end{proof}

\section{Polynomial Reformulations}

In this section we will show that in different coordinates
the recursion relations in the preceding section can be
reformulated as operations on polynomials.

\subsection{Change of variable}
In \S \ref{sec:EORecursion} and \ref{sec:DVV} we have presented some examples
of $\cW_{g,n}(z_1, \dots, z_n)$.
From these examples,
it is clear that after the following change of variables
$$w_i = \frac{1}{z_i^2},$$
One gets polynomial expressions:
\be
\omega_{g,n}(w_1, \dots, w_n)
= \cW_{g,n}(z_1, \dots, z_n).
\ee
By \eqref{Def:tW},
\be \label{Def:omega}
\omega_{g,n}(w_1, \dots, w_n)
= \sum_{a_1, \dots, a_n \geq 0}
\cor{\tau_{a_1} \cdots \tau_{a_n}}_g \prod_{i=1}^n (2a_i+1)!!w_i^{a_i+1}.
\ee
The following are some examples.
\ben
&& \omega_{0,3}(w_1,w_2,w_3) = w_1w_2w_3,  \\
&& \omega_{0,4}(w_1, \dots, w_4) = w_1\cdots w_4 \sum_{i=1}^4 w_i, \\
&& \omega_{0,5}(w_1, \dots, w_5)
= w_1\cdots w_5 (15\sum_{i=1}^5 w_i^2+18  \sum_{1 \leq i < j \leq 5} w_i w_j), \\
&& \omega_{0,6}(w_1,\dots,w_6)
= \prod_{i=1}^6 w_i^2\cdot
(105\sum_{i=1}^6  w_i^3
+  135 \sum_{1 \leq i \neq j \leq 6} w_i^2w_j \\
&& \;\;\;\; \;\;\;\;\;\; + 162 \sum_{1 \leq i < j < k \leq 6} w_iw_jw_k ),
\een
\ben
&& \omega_{1,1}(w_1) = \frac{1}{8}w_1^2,   \\
&& \omega_{1,2}(w_1,w_2)
= \frac{w_1w_2}{8} (5w_1^2+5w_2^2 + 3w_1w_2 ),  \\
&& \omega_{1,3}(w_1,w_2,w_3)  = \frac{w_1w_2w_3}{8} \cdot
( \sum_{i=1}^3 35w_i^3 + \sum_{1  \leq i \neq j \leq 3} 30w_i^2w_j
+ 18 w_1w_2w_3 \biggr),
\een
\ben
\omega_{2,1}(w_1) & = &  \frac{105}{128} w_1^5, \\
\omega_{2,2}(w_1,w_2) & = & \frac{w_1w_2}{128}\bigl( 1155(w_1^5+w_2^5)
+ 3465 (w_1^4w_2+w_1w_2^4) \\
& + & 6699(w_1^3w_2^2+w_1^2w_2^3) \bigr).
\een

\subsection{Recursion relations for $\omega_{g,n}$}

\begin{theorem} \label{them:omega}
Except for the case of $(g,n) = (0,2)$,
the following recursion relations hold:
\be \label{eqn:omegaRec}
\begin{split}
& \omega_{g,n+1}(w_0,w_1, \dots, w_n)  \\
= & \frac{1}{2} w_0 \omega_{g-1,n+2}(w_0,w_0, w_{[n]}) \\
+ & \frac{1}{2} w_0
\sum^s_{\substack{g_1+g_2=g \\ A_1 \coprod A_2 = [n]}}
\omega_{g_1, |A_1|+1}(w_0,w_{A_1}) \cdot \omega_{g_2, |A_2|+1}(w_0, w_{A_2}) \\
+ &  \sum_{i=1}^n D_{w_0,w_i} \omega_{g, n}(x, w_{[n]_i}),
\end{split}
\ee
where for $m \geq 0$,
\be
D_{u,v} x^m = uv(u^m + 3u^{m-1}v + 5 u^{m-2}v^2 + \cdots +(2m+1) v^m).
\ee
\end{theorem}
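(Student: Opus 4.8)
The plan is to rewrite the Eynard--Orantin recursion \eqref{eqn:EO2} in the form reached at the end of the proof of the theorem of \S\ref{sec:DVV} --- namely with the unstable contributions $(g_i,A_i)=(0,\{j\})$ peeled off and the surviving product sum cut down to $\sum^s$ --- and then to substitute $w_i=1/z_i^2$ and evaluate each residue explicitly. The one input used throughout is the identity $\cW_{g,n}=\tW_{g,n}$ of \S\ref{sec:DVV}, which tells us that every $\cW_{g,n}$ is even in each of its variables; this is what legitimizes the substitution $\omega_{g,n}(w_1,\dots,w_n)=\cW_{g,n}(z_1,\dots,z_n)$, lets us write $\cW_{g,n}(\dots,-z,\dots)=\cW_{g,n}(\dots,z,\dots)$, and lets us symmetrize $\tfrac1{(z-z_i)^2}\mapsto\tfrac12\bigl(\tfrac1{(z-z_i)^2}+\tfrac1{(z+z_i)^2}\bigr)=\tfrac{z^2+z_i^2}{(z^2-z_i^2)^2}$ inside $\res_{z=0}$ against the odd kernel $1/(z(z_0^2-z^2))$. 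After this preparation the recursion reads
\be
\begin{split}
\tW_{g,n+1}(z_0,z_{[n]})=\res_{z=0}\frac{1}{z(z_0^2-z^2)}\Bigl(&\tfrac12\,\tW_{g-1,n+2}(z,z,z_{[n]})\\
+&\tfrac12\sum^s_{\substack{g_1+g_2=g\\A_1\coprod A_2=[n]}}\tW_{g_1,|A_1|+1}(z,z_{A_1})\,\tW_{g_2,|A_2|+1}(z,z_{A_2})\\
+&\sum_{i=1}^n\tW_{g,n}(z,z_{[n]_i})\cdot\frac{z^2+z_i^2}{(z^2-z_i^2)^2}\Bigr).
\end{split}
\ee
The only $\tW_{0,2}$ that can enter on the right is $\tW_{0,2}$ in the first line when $(g,n)=(1,0)$; there one keeps the unsymmetrized value $\cW_{0,2}(z,-z)=1/(4z^2)$, and this is precisely the content of the symbol $\omega_{0,2}(w_0,w_0)=\tfrac14 w_0$ appearing in \eqref{eqn:omegaRec}. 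The hypothesis $(g,n)\neq(0,2)$ is exactly what prevents $\tW_{0,2}$ from occurring inside the last line, where we would need $\omega_{0,2}(x,w_{[n]_i})$ to be a genuine polynomial.

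For the first two lines I would use the elementary identity $\res_{z=0}\frac1{z(z_0^2-z^2)}z^{-2k}=z_0^{-2k-2}$ for $k\ge1$, which in the variables $u=1/z^2$, $w_0=1/z_0^2$ says that $\res_{z=0}\frac1{z(z_0^2-z^2)}(\,\cdot\,)$ carries a polynomial $P(u)$ with $P(0)=0$ to $w_0P(w_0)$. Taking $P(u)=\omega_{g-1,n+2}(u,u,w_{[n]})$ produces the term $\tfrac12 w_0\,\omega_{g-1,n+2}(w_0,w_0,w_{[n]})$ of \eqref{eqn:omegaRec}, and taking $P$ to be each product $\omega_{g_1,|A_1|+1}(u,w_{A_1})\,\omega_{g_2,|A_2|+1}(u,w_{A_2})$ produces the term $\tfrac12 w_0\sum^s\cdots$; both polynomials vanish at $u=0$ (to order two), since each factor begins in degree $\ge1$.

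The remaining line is where the real computation sits. By linearity it suffices to treat a single monomial: if $\tfrac{(2a+1)!!}{z^{2a+2}}$ is a term of $\tW_{g,n}(z,z_{[n]_i})$ --- equivalently, $(2a+1)!!\,x^{a+1}$ a monomial of $\omega_{g,n}(x,w_{[n]_i})$ in the variable $x$ --- then one must evaluate
\be
\res_{z=0}\frac{1}{z(z_0^2-z^2)}\cdot\frac{z^2+z_i^2}{(z^2-z_i^2)^2}\cdot\frac{1}{z^{2a+2}}.
\ee
Expanding $\tfrac1{z_0^2-z^2}$ and $\tfrac1{(z^2-z_i^2)^2}$ as geometric series in $z^2$ about $z=0$ and reading off the coefficient of $z^{-1}$, this equals, after collecting terms, $\sum_{k=0}^{a+1}(2k+1)\,w_0^{a+2-k}w_i^{k+1}$ in the $w$-variables; and this is exactly $D_{w_0,w_i}(x^{a+1})$ for the operator $D$ of the statement. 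Summing over the monomials of $\omega_{g,n}(x,w_{[n]_i})$ and over $i=1,\dots,n$ yields the last term $\sum_{i=1}^n D_{w_0,w_i}\omega_{g,n}(x,w_{[n]_i})$, and adding the three contributions gives \eqref{eqn:omegaRec}.

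Essentially everything in this plan is a term-by-term transcription of \eqref{eqn:EO2}; the single genuinely computational step is the last residue evaluation together with the bookkeeping needed to recognize the double sum as $uv\bigl(u^m+3u^{m-1}v+5u^{m-2}v^2+\cdots+(2m+1)v^m\bigr)$. I expect the main nuisances to be keeping the index shifts straight there and being careful about the exceptional $W_{0,2}$ term and the excluded case $(g,n)=(0,2)$, rather than any conceptual obstacle.
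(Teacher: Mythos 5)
Your proof is correct, but it takes a genuinely different route from the paper's. The paper's own proof of Theorem \ref{them:omega} is a one-line observation: by the definition \eqref{Def:omega}, the recursion \eqref{eqn:omegaRec} is the DVV relation \eqref{eqn:DVV} multiplied by $w_0^{a_0+1}\prod_{i}w_i^{a_i+1}$ and summed over all indices --- the operator $D_{u,v}$ is designed precisely so that $D_{w_0,w_i}x^{m+1}=\sum_{a_0+a_i=m+1}(2a_i+1)\,w_0^{a_0+1}w_i^{a_i+1}$ reproduces the first line of \eqref{eqn:DVV}, while the splitting $w_0^{b_1+b_2+3}=w_0\cdot w_0^{b_1+1}w_0^{b_2+1}$ reproduces the other two. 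You instead start from the Eynard--Orantin recursion \eqref{eqn:EO2}, invoke the identification $\cW_{g,n}=\tW_{g,n}$ of \S\ref{sec:DVV} to get evenness, peel off and symmetrize the $\cW_{0,2}$ terms, and evaluate residues in $z$; your key identity $\res_{z=0}\frac{1}{z(z_0^2-z^2)}\cdot\frac{z^2+z_i^2}{(z^2-z_i^2)^2}\cdot z^{-2a-2}=\sum_{k=0}^{a+1}(2k+1)\,w_0^{a+2-k}w_i^{k+1}$ checks out and does equal $D_{w_0,w_i}x^{a+1}$, and the identity $\res_{z=0}\frac{1}{z(z_0^2-z^2)}z^{-2k}=z_0^{-2k-2}$ handles the first two lines. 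This is essentially the computation in the proof of the theorem of \S\ref{sec:DVV} run in reverse, followed by the change of variables $w=1/z^2$; it is also close in spirit to the ``Derivation'' subsection \S 3.4, which however works with the modified kernel $1/(z_1(z_2^2-z_1^2))$ and contour integrals in the $w$-plane. What your version buys is a transparent explanation of why \eqref{eqn:omegaRec} is the $w$-coordinate avatar of the Airy-curve topological recursion, together with an explicit and correct treatment of the diagonal $\cW_{0,2}(z,-z)=1/(4z^2)$ term (so the $(g,n)=(1,0)$ instance comes out right) and of why $(g,n)=(0,2)$ must be excluded. What it costs is logical economy: since the identification $\cW_{g,n}=\tW_{g,n}$ was itself deduced from DVV, your argument travels DVV $\to$ Eynard--Orantin $\to$ \eqref{eqn:omegaRec}, whereas the paper goes directly from DVV to \eqref{eqn:omegaRec}.
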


\begin{proof}
By \eqref{Def:omega},
\eqref{eqn:omegaRec} is equivalent to \eqref{eqn:DVV}.
\end{proof}

The $(g,n) = (0,2)$ case is exceptional and can be treated separately as follows.
\ben
&& \omega_{0,3}(w_0,w_1, w_2)  \\
& = &  \int_{|u|=1/\epsilon^{1/2}}
\frac{w_0}{u-w_0} \cdot  \omega_{0, 2}(u,w_1) \cdot \omega_{0, 2}(u, w_2)  du \\
& = & \int_{|u|=1/\epsilon^{1/2}}
\frac{w_0}{u-w_0}
\cdot \frac{1}{(u-w_1)^2}
\cdot \frac{1}{(u-w_2)^2}  du.
\een
A complicated residue calculation by Maple yields:
\ben
\omega_{0,3}(w_0,w_1,w_2) = w_0w_1w_2.
\een
This is a match with \eqref{eqn:W(0,3)}.

\subsection{Examples}

\subsubsection{The $(g,n) = (0,3)$ case}
This is the first case of \eqref{eqn:omegaRec}:
\ben
&& \omega_{0,4}(w_0,w_1,w_2, w_3)
=  \sum_{i=1}^3 D_{w_0,w_i} \omega_{0, 3}(x, w_{[3]_i}) \\
& = & \sum_{i=1}^3 D_{w_0,w_i} (x \cdot \frac{w_1w_2w_3}{w_i})
= \sum_{i=1}^3 w_0w_i(w_0+3w_i) \cdot \frac{w_1w_2w_3}{w_i} \\
& = & 3w_0^2w_1w_2w_3 + 3w_0w_1w_2w_3 \sum_{i=1}^3 w_i
= 3 w_0w_1w_2w_3 \sum_{i=0}^3 w_i.
\een

\subsubsection{The $(g,n) = (1,1)$ case}

\ben
\omega_{1,2}(w_0,w_1)
& = & \frac{1}{2} w_0 \omega_{0,3}(w_0,w_0, w_1)
+  D_{w_0,w_1} \omega_{1, 1}(x) \\
& = & \half w_0\cdot w_0^2w_1
+ D_{w_0,w_1} \frac{1}{8} x^2 \\
& = & \frac{1}{2} w_0^3w_1
+ \frac{1}{8} w_0w_1(w_0^2+3w_0w_1 + 5 w_1^2) \\
& = & \frac{1}{8} w_0w_1(5w_0^2+5w_1^2 + 3w_0w_1).
\een

\subsection{Derivation of \eqref{eqn:omegaRec}}

Let us explain how \eqref{eqn:omegaRec} was derived originally.
In \cite{Zho},
the author has shown that the DVV recursion relations
are equivalent to Eynard-Orantin type recursion with
initial value
$$\cW_{0,2}(z_1,z_2) = \frac{z_1^2+z_2^2}{(z_1^2-z_2^2)^2}$$
and kernel function:
$$
K(z_1,z_2) = \frac{1}{z_1z_2(z_2-z_1)}.
$$
It is easy to check that the same holds for
\ben
&& \cW_2(z_1,z_2) = \frac{z_1^2+z_2^2}{(z_1^2-z_2^2)^2}, \\
&& K(z_1,z_2) = \frac{1}{z_1(z_2^2-z_1^2)}.
\een
In the $w$ coordinates,
we have
\ben
\omega_2(w_1,w_2) = \frac{w_1w_2(w_1+w_2)}{(w_1-w_2)^2}
= w_2 + \frac{3w_2^2}{w_1-w_2} +\frac{2w_2^3}{(w_1-w_2)^2}.
\een
and we have
\ben
&& \omega_{g,n+1}(w_0,w_1, \dots, w_n)  \\
& = & \half \cdot \frac{1}{2\pi i}
\int_{|z|=\epsilon} \biggl( \frac{1}{z(\frac{1}{w_0}-z^2)} \cdot
\biggl( \omega_{g-1,n+2}(\frac{1}{z^2},\frac{1}{z^2}, w_{[n]}) \\
& + & \sum_{\substack{g_1+g_2=g \\ A_1 \coprod A_2 = [n]}}
\omega_{g_1, |A_1|+1}(\frac{1}{z^2},w_{A_1}) \cdot \omega_{g_2, |A_2|+1}(\frac{1}{z^2}, w_{A_2})
\biggr) \biggr)dz \\
& = &-\frac{1}{2\pi i}\int_{|w|=1/\epsilon^{1/2}} \frac{w^{1/2}}{\frac{1}{w_0}-\frac{1}{w}}
\cdot \biggl( \omega_{g-1,n+2}(w,w, w_{[n]}) \\
& + & \sum_{\substack{g_1+g_2=g \\ A_1 \coprod A_2 = [n]}}
\omega_{g_1, |A_1|+1}(w,w_{A_1}) \cdot \omega_{g_2, |A_2|+1}(w, w_{A_2})
\biggr) d\frac{1}{w^{1/2}} \\
& = & \frac{1}{2}\cdot \frac{1}{2\pi i}\int_{|w|=1/\epsilon^{1/2}}
\frac{w_0}{w-w_0}
\cdot \biggl( \omega_{g-1,n+2}(w,w, w_{[n]}) \\
& + & 2 \sum_{i=1}^n \omega_{0, 2}(w,w_i) \cdot \omega_{g, n}(w, w_{[n]_i}) \\
& + & \sum^s_{\substack{g_1+g_2=g \\ A_1 \coprod A_2 = [n]}}
\omega_{g_1, |A_1|+1}(w,w_{A_1}) \cdot \omega_{g_2, |A_2|+1}(w, w_{A_2})
\biggr) dw.
\een
The first line on the right-hand side of the last equality is:
\be
I = \frac{1}{2} w_0 \omega_{g-1,n+2}(w_0,w_0, w_{[n]}).
\ee
The third line on the
right-hand side of the last equality is:
\be
III = \frac{1}{2} w_0
\sum^s_{\substack{g_1+g_2=g \\ A_1 \coprod A_2 = [n]}}
\omega_{g_1, |A_1|+1}(w_0,w_{A_1}) \cdot \omega_{g_2, |A_2|+1}(w_0, w_{A_2}).
\ee
The second line on the
right-hand side of the last equality is more complicated.
Recall that if $f(z)$ is holomorphic at $z=z_0$,
then one has
\ben
&& \res_{z=z_0} \frac{f(z)}{z-z_0} = f(z_0), \\
&& \res_{z=z_0} \frac{f(z)}{(z-z_0)^2} = f'(z_0).
\een
So we have
\ben
II & = & \frac{1}{2\pi i}\int_{|w|=\epsilon^{-1/2}}
\frac{w_0}{u-w_0}\cdot  \sum_{i=1}^n \omega_{0, 2}(u,w_i) \cdot \omega_{g, n}(u, w_{[n]_i}) du \\
& = & \frac{1}{2\pi i} \int_{|w|=\epsilon^{-1/2}}
\frac{w_0}{u-w_0} \cdot  \sum_{i=1}^n\biggl(
w_i+ \frac{3w_i^2}{u-w_i} + \frac{2w_i^3}{(u-w_i)^2} \biggr) \cdot \omega_{g, n}(u, w_{[n]_i}) du \\
& = &  \sum_{i=1}^n w_0 \cdot \frac{w_0w_i(w_0+w_i)}{(w_0-w_i)^2}
\cdot \omega_{g, n}(w_0, w_{[n]_i})
+  \sum_{i=1}^n \frac{w_0}{w_i-w_0}\cdot 3w_i^2 \cdot \omega_{g,n}(w_{[n]}) \\
& - & \sum_{i=1}^n \frac{w_0}{(w_i-w_0)^2} \cdot 2\omega_i^3 \cdot \omega_{g, n}(w_{[n]})
+ \sum_{i=1}^n \frac{w_0}{w_i-w_0} \cdot 2\omega_i^3 \pd_{w_i} \omega_{g, n}(w_{[n]}).
\een
Now we introduce a linear operator
\be
D_{u,v}: \bC[x] \to \bC[u,v],
\ee
defined as follows:
\ben
D_{u,v} f(x):
= uv \cdot \biggl( \frac{u(u+v)}{(u-v)^2} \cdot f(u)
+ \frac{3v}{v-u} f(v)
- \frac{2v^2}{(v-u)^2} f(v)
+ \frac{2v^2}{v-u} \cdot f'(v)\biggr).
\een
With this operator,
we have:
\ben
&& \omega_{g,n+1}(w_0,w_1, \dots, w_n)  \\
& = & \frac{1}{2} w_0 \omega_{g-1,n+2}(w_0,w_0, w_{[n]}) \\
& + & \frac{1}{2} w_0
\sum^s_{\substack{g_1+g_2=g \\ A_1 \coprod A_2 = [n]}}
\omega_{g_1, |A_1|+1}(w_0,w_{A_1}) \cdot \omega_{g_2, |A_2|+1}(w_0, w_{A_2}) \\
& + &  \sum_{i=1}^n D_{w_0,w_i} \omega_{g, n}(x, w_{[n]_i}).
\een

\begin{lemma}
For $m \geq 0$, one has
\ben
&& D_{u,v}x^m
= uv (u^m + 3 u^{m-1}v + 5u^{m-2}v^2 + \cdots +(2m+1)v^m).
\een
\end{lemma}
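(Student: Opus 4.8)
The plan is to compute $D_{u,v}x^m$ directly from the definition and then recognize the answer. First I would take $f(x)=x^m$, so that $f(u)=u^m$, $f(v)=v^m$, and $f'(v)=m v^{m-1}$, and substitute into
\[
D_{u,v}f(x)=uv\left(\frac{u(u+v)}{(u-v)^2}f(u)+\frac{3v}{v-u}f(v)-\frac{2v^2}{(v-u)^2}f(v)+\frac{2v^2}{v-u}f'(v)\right).
\]
Next I would clear all four terms onto the common denominator $(u-v)^2$, using $v-u=-(u-v)$ and $(v-u)^2=(u-v)^2$: the first and third terms contribute $u^{m+1}(u+v)$ and $-2v^{m+2}$ to the numerator, while the second and fourth terms, which carry only one power of $v-u$, combine to $-(2m+3)v^{m+1}(u-v)$. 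Expanding the numerator $u^{m+1}(u+v)-2v^{m+2}-(2m+3)v^{m+1}(u-v)$ collapses it to $u^{m+2}+u^{m+1}v-(2m+3)uv^{m+1}+(2m+1)v^{m+2}$, so the lemma is reduced to the algebraic identity
\[
u^{m+2}+u^{m+1}v-(2m+3)uv^{m+1}+(2m+1)v^{m+2}=(u-v)^2\sum_{j=0}^m (2j+1)\,u^{m-j}v^j .
\]

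To prove this identity I would induct on $m$. The base case $m=0$ is just $(u-v)^2=u^2-2uv+v^2$. For the inductive step, writing $S_m=\sum_{j=0}^m(2j+1)u^{m-j}v^j$, I would use the trivial recursion $S_{m+1}=uS_m+(2m+3)v^{m+1}$; multiplying the inductive hypothesis by $u$ and adding $(2m+3)(u-v)^2v^{m+1}$ produces exactly the claimed right-hand side for $m+1$ once the two $u^2v^{m+1}$ terms cancel. An alternative is to expand $(u^2-2uv+v^2)S_m$ in one go and compare the coefficients of $u^{m+2-k}v^k$, which come out $1,1,0,\dots,0,-(2m+3),2m+1$ for $k=0,\dots,m+2$; this works, but then one must check $m=0,1$ separately, since the index ranges of the three contributing sums overlap differently there (and indeed $D_{u,v}1=uv$ and $D_{u,v}x=uv(u+3v)$ agree with the formula).

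I do not expect a genuine obstacle: this is an elementary computation. The only things to watch are the sign bookkeeping when the $1/(v-u)$ and $1/(v-u)^2$ pieces are rewritten over $(u-v)$ and $(u-v)^2$, and — only if one takes the coefficient-comparison route — the small-$m$ edge cases. The induction avoids the latter cleanly, so that is the version I would write up.
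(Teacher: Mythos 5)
Your computation is correct and begins exactly as the paper's proof does: substitute $f(x)=x^m$ into the definition, combine the four terms over the common denominator $(u-v)^2$, and arrive at the numerator $u^{m+2}+u^{m+1}v-(2m+3)uv^{m+1}+(2m+1)v^{m+2}$. The only difference is in how the division by $(u-v)^2$ is then carried out: the paper peels off the two factors of $(u-v)$ explicitly via the geometric-sum factorization $a^k-b^k=(a-b)(a^{k-1}+\cdots+b^{k-1})$ and a telescoping regrouping, whereas you verify the equivalent polynomial identity by induction on $m$ using $S_{m+1}=uS_m+(2m+3)v^{m+1}$; both routes are valid and equally elementary, so your write-up is a fine substitute.
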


\begin{proof}
This is elementary:
\ben
&& \frac{u(u+v)}{(u-v)^2} \cdot u^m
+ \frac{3v}{v-u} \cdot v^m
- \frac{2v^2}{(v-u)^2} \cdot v^m
+ \frac{2v^2}{v-u} \cdot mv^{m-1} \\
& = & \frac{u^{m+2}+u^{m+1}v-2v^{m+2}}{(u-v)^2}
- \frac{(2m+3)v^{m+1}}{u-v} \\
& = &  \frac{u^{m+1}(u-v)+2(u^{m+1}-v^{m+1})v}{(u-v)^2}
- \frac{(2m+3)v^{m+1}}{u-v} \\
& = & \frac{1}{u-v}(u^{m+1} + 2(u^m+u^{m-1}v+ \cdots + v^{m-1})v
- (2m+3) v^{m+1}) \\
& = & \frac{1}{u-v}[(u^{m+1} - v^{m+1})
+ 2(u^m-v^m)v+2(u^{m-1}-v^{m-1})v^2 \\
& + & \cdots + (u-v)v^m ] \\
& = & u^m + 3 u^{m-1}v + 5u^{m-2}v^2 + \cdots +(2m+1)v^m.
\een
\end{proof}

\section{Recursion Relations for the Antiderivatives of $\omega_{g,n}$}

\subsection{The antiderivatives of $\omega_{g,n}$}
These are defined by:
\be \label{Def:Omega}
\Omega_{g,n}
= \sum_{a_1, \dots, a_n \geq 0}
\cor{\tau_{a_1} \cdots \tau_{a_n}}_g \prod_{i=1}^n (2a_i-1)!! w_i^{a_i+1/2}.
\ee
Here we use the following convention:
\be
(-1)!! = 1.
\ee
For example,
\ben
&& \Omega_{0,3}(w_1,w_2,w_3)= (w_1 w_2 w_3)^{1/2}, \\
&& \Omega_{0,4}(w_1,\dots, w_4) = w_1^{1/2} \cdots w_4^{1/2} \sum_{i=1}^4 w_i, \\
&& \Omega_{0,5}(w_1,\dots, w_4) =  (w_1\cdots w_4)^{1/2}
\biggl( \sum_{i=1}^4 3w_i^2 + \sum_{1 \leq i < j \leq 5} 2w_iw_j \biggr),
\een
\ben
&& \Omega_{1,1}(w_1) = \frac{1}{24}w_1^{3/2},   \\
&& \Omega_{1,2}(w_1,w_2)
= \frac{(w_1w_2)^{1/2}}{24} \biggl(3w_1^2+3w_2^2 +w_1w_2 \biggr),  \\
&& \Omega_{1,3}(w_1,w_2,w_3) = \frac{1}{24(\omega_1\omega_2\omega_3)^{1/2}}\cdot
\biggl( 15\sum_{i=1}^3 \omega_i^3
+ 6 \sum_{1  \leq i \neq j \leq 3} w_i^2w_j + 2w_1w_2w_3 \biggr).
\een

\begin{lemma}
The functions $\omega_{g,n}$ and $\Omega_{g,n}$ are related by:
\be
\omega_{g,n}
= 2^n \prod_{j=1}^n w_j^{3/2}
\cdot \pd_{w_1} \cdots \pd_{w_n} \Omega_{g,n}.
\ee
\end{lemma}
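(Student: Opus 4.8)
The plan is to verify the identity term by term on monomials. By the defining expansions \eqref{Def:omega} and \eqref{Def:Omega}, for fixed $(g,n)$ both $\omega_{g,n}$ and $\Omega_{g,n}$ are finite sums indexed by the same tuples $(a_1,\dots,a_n)$ of nonnegative integers, carrying the same coefficients $\cor{\tau_{a_1}\cdots\tau_{a_n}}_g$; since the operator $2^n\prod_{j=1}^n w_j^{3/2}\,\pd_{w_1}\cdots\pd_{w_n}$ is linear, it suffices to show that it sends the ``weight monomial'' $\prod_{i=1}^n (2a_i-1)!!\,w_i^{a_i+1/2}$ of $\Omega_{g,n}$ to the weight monomial $\prod_{i=1}^n (2a_i+1)!!\,w_i^{a_i+1}$ of $\omega_{g,n}$.

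First I would observe that the operator factors as a composition of $n$ mutually commuting single-variable operators $L_j := 2\,w_j^{3/2}\,\pd_{w_j}$, each acting only on the variable $w_j$ (the differentiation is performed before multiplication by $w_j^{3/2}$). This reduces the whole claim to the one-variable computation
\[
L\bigl(w^{a+\half}\bigr)=2\,w^{3/2}\cdot\bigl(a+\half\bigr)w^{a-\half}=(2a+1)\,w^{a+1},\qquad a\ge 0,
\]
which holds formally for any exponent, there being no convergence issue since each $\Omega_{g,n}$ has only finitely many terms. Combining this with the elementary double-factorial identity $(2a+1)(2a-1)!!=(2a+1)!!$ gives $L\bigl((2a-1)!!\,w^{a+\half}\bigr)=(2a+1)!!\,w^{a+1}$, i.e. $L$ carries the $a$-th term of the one-variable weight of $\Omega$ onto the $a$-th term of that of $\omega$.

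Finally, applying $L_j$ in each slot $j=1,\dots,n$ to the product $\prod_{i=1}^n(2a_i-1)!!\,w_i^{a_i+1/2}$ produces $\prod_{i=1}^n(2a_i+1)!!\,w_i^{a_i+1}$, and summing against $\cor{\tau_{a_1}\cdots\tau_{a_n}}_g$ over all $(a_1,\dots,a_n)$ recovers $\omega_{g,n}$ by \eqref{Def:omega}. There is essentially no obstacle in this argument; the one point that deserves care is the $a_i=0$ terms, where the stated convention $(-1)!!=1$ is exactly what is needed so that $L\bigl(1\cdot w^{\half}\bigr)=w=(2\cdot 0+1)!!\,w^{0+1}$ — which is precisely why the uniform prefactor $2^n\prod_j w_j^{3/2}$, with no dependence on the indices $a_i$, does the job.
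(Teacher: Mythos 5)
Your proof is correct and is exactly the computation the paper has in mind: the paper's proof consists of the single sentence that the identity ``is easy to see from'' the two defining expansions, and your term-by-term verification $2w^{3/2}\pd_w\bigl((2a-1)!!\,w^{a+1/2}\bigr)=(2a+1)!!\,w^{a+1}$ is precisely the omitted detail. No issues; the attention to the $a_i=0$ case and the convention $(-1)!!=1$ is a welcome bit of extra care.
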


\begin{proof}
This is easy to see from \eqref{Def:omega} and \eqref{Def:Omega}.
\end{proof}

\subsection{Recursion relations for $\Omega_{g,n}$}

We need the following easy observation.

\begin{lemma}
The following identity holds:
\be
\begin{split}
& D_{w_0,w_i} \omega_{g, n}(x, w_{[n]_i}) \\
= & 2^{n+1}   (w_1\dots w_n)^{3/2} \pd_{w_1}\cdots \pd_{w_n} \cD_{w_0,w_i}
\pd_x \Omega_{g,n}(x, w_{[n]_i}),
\end{split}
\ee
where $\cD_{u,v}: \bC[x] x^{-1/2} \to \bC[u,v]uv^{1/2}$
is a linear operator defined by:
\be
\cD_{u,v} x^{a-1/2} = uv^{1/2}(u^{a+1} + u^av + \cdots + v^{a+1}).
\ee
\end{lemma}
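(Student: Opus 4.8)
The plan is to compare the two sides monomial by monomial, using the previous Lemma (the one expressing $\omega_{g,n}$ via $\Omega_{g,n}$) to eliminate $\omega_{g,n}$ entirely and reduce the claim to an identity between the linear operators $D_{u,v}$ and $\cD_{u,v}$ acting on appropriate monomials. Concretely, write $\omega_{g,n}(x,w_{[n]_i})$ as a sum over multi-indices; by the previous Lemma each summand is, up to the known normalization factors $(2a_j+1)!!$ and $(2a_j-1)!!$, a monomial $x^{m}\prod_{j\in[n]_i} w_j^{a_j+1}$ paired with the corresponding monomial $x^{a_0-1/2}\prod_{j\in[n]_i}w_j^{a_j+1/2}$ appearing in $\Omega_{g,n}$. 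Since $D_{w_0,w_i}$ and $\cD_{w_0,w_i}$ act only in the first slot $x$, and the operators $\pd_{w_1}\cdots\pd_{w_n}$ and the prefactor $(w_1\cdots w_n)^{3/2}$ act only in the remaining slots, the identity splits as a product: the $w_{[n]_i}$-part matches exactly by the previous Lemma applied to $\Omega_{g,n}$ in $n-1$ variables, and what remains is to check that for the $i$-th variable and the first variable,
\[
D_{w_0,w_i}\bigl((2m+1)!!\,x^{m}\bigr)
= 2\,w_i^{3/2}\,\pd_{w_i}\,\cD_{w_0,w_i}\bigl((2m-1)!!\,x^{m-1/2}\bigr),
\]
where $m=a_0$ here plays the role of the exponent in the first slot and the $w_i$-exponent is correlated with it through the Deligne--Mumford indices — but in fact $D$ and $\cD$ are defined purely formally on powers of $x$, so it suffices to verify this as an identity of polynomials in $w_0$ and $w_i$ for each $m\ge 0$.

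The key computation, then, is the following elementary check. By the Lemma just proved in the excerpt, $D_{w_0,w_i}x^m = w_0 w_i\sum_{k=0}^m (2k+1) w_0^{m-k}w_i^{k}$. On the other side, by definition $\cD_{w_0,w_i}x^{m-1/2} = w_0 w_i^{1/2}\sum_{k=0}^{m}w_0^{m-k}w_i^{k}$ (reading $a=m$, so $a+1=m+1$ terms), hence
\[
w_i^{3/2}\,\pd_{w_i}\,\cD_{w_0,w_i}x^{m-1/2}
= w_i^{3/2}\,\pd_{w_i}\Bigl(w_0 w_i^{1/2}\sum_{k=0}^{m}w_0^{m-k}w_i^{k}\Bigr)
= w_0 w_i^{3/2}\sum_{k=0}^{m}\bigl(k+\tfrac12\bigr)w_0^{m-k}w_i^{k-1/2},
\]
which equals $\tfrac12 w_0 w_i\sum_{k=0}^{m}(2k+1)w_0^{m-k}w_i^{k} = \tfrac12 D_{w_0,w_i}x^m$, accounting for the factor $2$. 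One then also tracks the factorial normalizations: passing from $(2a_i+1)!!$ in $\omega$ to $(2a_i-1)!!$ in $\Omega$ introduces exactly the factor $2a_i+1 = 2(a_i+1)-1$, which is precisely the power of $w_i$ brought down by $\pd_{w_i}$ acting on $w_i^{a_i+1}$ versus $w_i^{a_i+1/2}$ — this is the content of the previous Lemma and needs only to be invoked, not re-proved. Assembling the factor $2^n$ from the previous Lemma with the extra $2$ above gives the stated $2^{n+1}$.

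I expect the only real obstacle to be bookkeeping: making sure the first slot $x$ and the special slot $w_i$ are handled consistently, since $D_{w_0,w_i}$ genuinely mixes $w_0$ and $w_i$ (it is the residue of $\omega_{0,2}$ times the propagator kernel acting on $x=w_0$ and $x=w_i$), whereas the $\pd_{w_i}$ and the $w_i^{3/2}$ prefactor belong to the $\Omega$-to-$\omega$ dictionary. The clean way to organize this is to first apply the previous Lemma to rewrite the right-hand side with all the $w$-derivatives and prefactors absorbed, then apply it again (in $n-1$ variables, i.e.\ to the slots $w_{[n]_i}$ of $\Omega_{g,n}$) on the left, so that both sides are visibly $\sum_{a}\langle\cdots\rangle_g\cdot(\text{operator applied to }x^{a_0})\cdot\prod_{j\in[n]_i}(\cdots w_j\cdots)$ with identical $w_{[n]_i}$-factors, leaving exactly the two-variable identity displayed above. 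Once the identity is framed that way, the remaining verification is the short computation already carried out.
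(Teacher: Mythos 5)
Your overall strategy is the same as the paper's: expand $\omega_{g,n}(x,w_{[n]_i})$ and $\Omega_{g,n}(x,w_{[n]_i})$ into monomials, match the slots $j\in[n]_i$ by the previous Lemma's dictionary $2w_j^{3/2}\pd_{w_j}\colon (2a_j-1)!!\,w_j^{a_j+1/2}\mapsto(2a_j+1)!!\,w_j^{a_j+1}$, and reduce everything to a two-variable identity between $D_{w_0,w_i}$ and $\cD_{w_0,w_i}$ on powers of $x$. That is exactly how the paper argues.

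However, the key identity you display is wrong as stated, for two compounding reasons. First, you misread the definition of $\cD$: with $a=m$ the definition gives $\cD_{w_0,w_i}x^{m-1/2}=w_0w_i^{1/2}(w_0^{m+1}+w_0^mw_i+\cdots+w_i^{m+1})$, a homogeneous polynomial of degree $m+1$ with $m+2$ terms, not $w_0w_i^{1/2}\sum_{k=0}^m w_0^{m-k}w_i^k$. The correct slot identity is
\[
2\,w_i^{3/2}\pd_{w_i}\,\cD_{w_0,w_i}x^{a-1/2}=D_{w_0,w_i}x^{a+1},
\]
a shift of exponent by $3/2$, not $1/2$; this is what is actually needed, since $\omega$ carries $x^{a+1}$ in its first slot while $\pd_x\Omega$ carries $x^{a-1/2}$. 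Second, your displayed identity $D_{w_0,w_i}((2m+1)!!x^m)=2w_i^{3/2}\pd_{w_i}\cD_{w_0,w_i}((2m-1)!!x^{m-1/2})$ does not balance the normalizations even granting your formula for $\cD$: the two sides differ by the factor $2m+1$. The missing factor comes from $\pd_x$, which you have dropped from the right-hand side: $\pd_x\bigl((2a-1)!!\,x^{a+1/2}\bigr)=\tfrac{1}{2}(2a+1)!!\,x^{a-1/2}$ supplies one factor of $2$, and $w_i^{3/2}\pd_{w_i}$ (turning the coefficients $1,1,\dots,1$ of $\cD$ into $\tfrac12(2k+1)$) supplies another. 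So the $i$-th slot contributes $2^2=4$, the remaining $n-1$ slots contribute $2^{n-1}$, and the total is $2^{n+1}$; your accounting of ``$2^n$ from the previous Lemma times an extra $2$'' reaches the right number but misattributes it, since only $n-1$ of the $w$-slots are governed by the previous Lemma. These are bookkeeping errors rather than a wrong idea, but as written the central verification does not establish the lemma; once the definition of $\cD$ and the role of $\pd_x$ are corrected, your argument coincides with the paper's proof.
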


\begin{proof}
By \eqref{Def:omega},
\ben
&& \omega_{g,n}(x,w_{[n]_i}) \\
& = & \sum_{a, a_1, \dots, \hat{a_i}, \dots, a_n \geq 0}
\cor{\tau_a \tau_{a_1} \cdots \widehat{\tau_{a_i}} \cdots \tau_{a_n}}_g
(2a+1)!!x^{a+1}
\prod_{\substack{1\leq j \leq n\\ j\neq i}} (2a_j+1)!!w_j^{a_j+1},
\een
and so
\ben
&& D_{w_0,w_i} \omega_{g,n}(x,w_{[n]_i}) \\
& = & \sum_{a, a_1, \dots, \hat{a_i}, \dots, a_n \geq 0}
\cor{\tau_a \tau_{a_1} \cdots \widehat{\tau_{a_i}} \cdots \tau_{a_n}}_g
(2a+1)!! \cdot D_{w_0,w_i}x^{a+1} \\
&& \cdot
\prod_{\substack{1\leq j \leq n\\ j\neq i}} (2a_j+1)!!w_j^{a_j+1} \\
& = & \sum_{a, a_1, \dots, \hat{a_i}, \dots, a_n \geq 0}
\cor{\tau_a \tau_{a_1} \cdots \widehat{\tau_{a_i}} \cdots \tau_{a_n}}_g
 \cdot
\prod_{\substack{1\leq j \leq n\\ j\neq i}} (2a_j+1)!!w_j^{a_j+1} \\
&& \cdot (2a+1)!! \cdot w_0w_i (w_0^{a+1}+3w_0^aw_i + \cdots + (2a+3) w_i^{a+1})  \\
& = & 2^{n+1} (w_1\dots w_n)^{3/2} \pd_{w_1}\cdots \pd_{w_n}
\sum_{a, a_1, \dots, \hat{a_i}, \dots, a_n \geq 0}
\cor{\tau_a \tau_{a_1} \cdots \widehat{\tau_{a_i}} \cdots \tau_{a_n}}_g \\
&& \cdot
\prod_{\substack{1\leq j \leq n\\ j\neq i}} (2a_j-1)!!w_j^{a_j+1/2} \cdot (2a+1)!! \\
&& \cdot w_0w_i^{1/2} (w_0^{a+1}+w_0^aw_i + \cdots + w_i^{a+1})  \\
& = & 2^{n+1} (w_1\dots w_n)^{3/2} \pd_{w_1}\cdots \pd_{w_n} \cD_{w_0,w_i}
\pd_x \Omega_{g,n}(x, w_{[n]_i}).
\een
\end{proof}

With the above two Lemmas,
it is easy to derive from Theorem

\begin{theorem} \label{thm:Omega}
Except for the case of $(g,n) = (0,2)$,
the following recursion relations hold:
\be \label{eqn:OmegaRec}
\begin{split}
& \pd_{w_0} \Omega_{g,n+1}(w_0,w_1, \dots, w_n) \\
= & w_0^{5/2} \pd_x\pd_y \Omega_{g-1,n+2}(x,y,w_{[n]})|_{x=y=w_0} \\
+ & w_0^{5/2} \sum^s_{\substack{g_1+g_2=g \\ A_1 \coprod A_2 = [n]}}
\pd_{w_0} \Omega_{g_1, |A_1|+1}(w_0,w_{A_1}) \cdot \pd_{w_0} \Omega_{g_2, |A_2|+1}(w_0, w_{A_2}) \\
+ & w_0^{-3/2} \sum_{i=1}^n \cD_{w_0,w_i} \pd_x \Omega_{g,n}(x, w_{[n]_i}).
\end{split}
\ee
\end{theorem}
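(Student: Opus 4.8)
The plan is to deduce the recursion \eqref{eqn:OmegaRec} for $\Omega_{g,n+1}$ directly from the recursion \eqref{eqn:omegaRec} for $\omega_{g,n+1}$, by rewriting every term of the latter through the two preceding lemmas, recognizing a common differential operator in front of each term, and cancelling it. Throughout set $L := 2^{n+1} w_0^{3/2}\bigl(\prod_{j=1}^n w_j^{3/2}\bigr)\pd_{w_1}\cdots\pd_{w_n}$.

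First I would record what each term of \eqref{eqn:omegaRec} becomes. For the left-hand side, the first lemma applied in the $n+1$ variables $w_0,w_1,\dots,w_n$, together with the fact that $\pd_{w_0}$ commutes with $\pd_{w_1}\cdots\pd_{w_n}$, gives $\omega_{g,n+1}(w_0,w_{[n]}) = L\bigl(\pd_{w_0}\Omega_{g,n+1}\bigr)$. For the term $\tfrac12 w_0\,\omega_{g-1,n+2}(w_0,w_0,w_{[n]})$ I would apply the first lemma to $\omega_{g-1,n+2}$ in the variables $x,y,w_1,\dots,w_n$, specialize $x=y=w_0$ at the end (the specialization commutes with $\pd_{w_1}\cdots\pd_{w_n}$ and with multiplication by powers of $w_0$), and collect the power $w_0^4 = w_0^{3/2}\cdot w_0^{5/2}$; this yields $L\bigl(w_0^{5/2}\,\pd_x\pd_y\Omega_{g-1,n+2}(x,y,w_{[n]})|_{x=y=w_0}\bigr)$. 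For the stable-splitting term I would apply the first lemma to each of $\omega_{g_1,|A_1|+1}$ and $\omega_{g_2,|A_2|+1}$; since $\Omega_{g_1,|A_1|+1}$ depends only on $w_0$ and the $w_j$ with $j\in A_1$, while $\Omega_{g_2,|A_2|+1}$ depends only on $w_0$ and the $w_j$ with $j\in A_2$, and $A_1\cap A_2=\emptyset$, the operator $\pd_{w_1}\cdots\pd_{w_n}$ factors over the product (by the Leibniz rule only the distribution sending $\prod_{j\in A_1}\pd_{w_j}$ onto the first factor and $\prod_{j\in A_2}\pd_{w_j}$ onto the second survives), so that each summand becomes $L\bigl(w_0^{5/2}\,\pd_{w_0}\Omega_{g_1,|A_1|+1}\cdot\pd_{w_0}\Omega_{g_2,|A_2|+1}\bigr)$. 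Finally, the sum $\sum_{i=1}^n D_{w_0,w_i}\omega_{g,n}(x,w_{[n]_i})$ is handled by the second lemma verbatim; inserting $w_0^{3/2}w_0^{-3/2}$ rewrites it as $L\bigl(w_0^{-3/2}\sum_{i=1}^n\cD_{w_0,w_i}\pd_x\Omega_{g,n}(x,w_{[n]_i})\bigr)$.

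Putting these together, \eqref{eqn:omegaRec} becomes exactly $L\bigl(\pd_{w_0}\Omega_{g,n+1}\bigr) = L(R)$, where $R$ denotes the right-hand side of \eqref{eqn:OmegaRec}. To conclude I would observe that both $\pd_{w_0}\Omega_{g,n+1}$ and $R$ lie in the space of (formal) series in $w_0,w_1,\dots,w_n$ each of whose monomials has every $w_j$-exponent with $1\le j\le n$ equal to a half-integer $\ge\tfrac12$ --- immediate from \eqref{Def:Omega} on the left, and checked term by term for $R$, using for the last term that $\cD_{w_0,w_i}x^{a-1/2}$ carries only $w_i$-exponents $\ge\tfrac12$. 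On that space, multiplication by $w_0^{3/2}\prod_j w_j^{3/2}$ is injective and each $\pd_{w_j}$ (which sends $w_j^c\mapsto c\,w_j^{c-1}$ with $c\ge\tfrac12\ne0$) is injective, so $L$ is injective; hence $\pd_{w_0}\Omega_{g,n+1}=R$, which is \eqref{eqn:OmegaRec}. The excluded case $(g,n)=(0,2)$ is inherited from Theorem \ref{them:omega}, being precisely where the non-polynomial $\omega_{0,2}$ would intervene.

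The $w_0$-power bookkeeping (the $w_0^{5/2}$ and $w_0^{-3/2}$ prefactors) is routine. The one step that genuinely needs care is the factorization of $\pd_{w_1}\cdots\pd_{w_n}$ over the product $\Omega_{g_1,|A_1|+1}\cdot\Omega_{g_2,|A_2|+1}$ in the stable-splitting term --- which rests on the disjointness of $A_1$ and $A_2$ and on each antiderivative depending only on its own variables --- together with the final injectivity check that legitimizes cancelling $L$ from both sides.
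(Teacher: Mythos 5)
Your proposal is correct and follows exactly the route the paper intends: the paper states that Theorem \ref{thm:Omega} follows from Theorem \ref{them:omega} together with the two lemmas relating $\omega_{g,n}$, $D_{u,v}$ to $\Omega_{g,n}$, $\cD_{u,v}$, and leaves the derivation implicit. You have simply written out that derivation, with the useful addition of an explicit justification (half-integer exponents $\geq \tfrac12$, hence injectivity of each $\pd_{w_j}$) for cancelling the common operator $L$ from both sides.
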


\subsection{Examples}
\subsubsection{The $(g,n) = (0,3)$ case}
\ben
\pd_{w_0} \Omega_{0,4}(w_0,w_1,w_2,w_3)
& = & w_0^{-3/2} \sum_{i=1}^3 \cD_{w_0,w_i}
\pd_x \Omega_{0, 3}(x, w_{[3]_i}) \\
& = &  w_0^{-3/2} \sum_{i=1}^3 \cD_{w_0,w_i} \pd_x (x^{1/2} \cdot
\frac{(w_1w_2w_3)^{1/2}}{w_i^{1/2}}) \\
& = & \half w_0^{-3/2}\sum_{i=1}^3 w_0w_i^{1/2} (w_0+w_i) \cdot \frac{(w_1w_2w_3)^{1/2}}{w_i^{1/2}} \\
& = & \frac{3}{2}w_0^{1/2} (w_1w_2w_3)^{1/2}
+ \half w_0^{-1/2} (w_1w_2w_3)^{1/2} \sum_{i=1}^3 w_i.
\een

\subsubsection{The $(g,n) = (1,1)$ case}

\ben
\pd_{w_0} \Omega_{1,2}(w_0,w_1)
& = & w_0^{5/2} \pd_{x}\pd_y  \Omega_{0,3}(x,y, w_1)|_{x=y=w_0}
+ u^{-3/2}  \cD_{w_0,w_1} \Omega_{1, 1}(x) \\
& = & w_0^{5/2} \cdot \pd_x\pd_y((xyw_1)^{1/2})|_{x=y=w_0}
+ \cD_{w_0,w_1} \pd_x \frac{1}{24} x^{3/2} \\
& = & \frac{1}{4} w_0^{3/2}w_1^{1/2}
+ \frac{1}{16} w_0^{-3/2} \cdot w_0w_1^{1/2}(w_0^2+w_0w_1 + w_1^2) \\
& = & \frac{1}{16} w_0^{-1/2}w_1^{1/2}(5w_0^2 + w_0w_1 +w_1^2).
\een

\section{Gukov-Su{\l}kowski Conjecture for the Airy Curve}

\subsection{The formulation of the conjecture in $u$-coordinates}

Quantization of the defining polynomial of the Airy curve
\be
A = \frac{1}{2} v^2 - u
\ee
by the assignment
\be
\hat{u} = u, \;\;\; \hat{v} = \hbar \pd_u.
\ee
yields the following differential operator:
\be
\hat{A} = \frac{1}{2} \hbar^2 \pd_u^2 - u.
\ee
Under the following parametrization
\bea
&& u(z) = \frac{1}{2} z^2, \\
&& v(z) = z,
\eea
we have
\be
S_0 = \int^z v(z) du(z) = \int^z z^2dz = \frac{1}{3}z^3,
\ee
\be
S_1 = - \frac{1}{2} \log \frac{du}{dz} = - \half \log (z),
\ee
and
\be
\begin{split}
S_n = & \sum_{2g-1+k=n} \frac{(-1)^k}{k!} \int^z dz_1'\cdots \int^z dz_n'
\cW_{g,k}(z_1', \dots, z_k') \\
= & \sum_{2g-1+k=n} \frac{(-1)^k}{k!} \Xi_{g,k}(z, \dots, z),
\end{split}
\ee
where
\bea
&& \Xi_{g,n}(z_1,\dots,z_n)
= \int^{z_1} \cdots \int^{z_n} \cW_{g,n}(z_1, \dots, z_n)
dz_1 \cdots dz_n \\
& = & (-1)^n  \sum_{a_1, \dots, a_n \geq 0}
\cor{\tau_{a_1} \cdots \tau_{a_n}}_g \prod_{i=1}^n
\frac{(2a_i-1)!!}{z_i^{2a_i+1}}.
\eea
For example,
\ben
S_2 & = &  -\Xi_{1,1}(z) - \frac{1}{3!} \Xi_{0,3}(z,z,z)
= \frac{1}{24z^3}  + \frac{1}{6z^3}  = \frac{5}{24z^3}.
\een
\ben
S_3
= \frac{1}{2!} \Xi_{1,2}(z,z)
+ \frac{1}{4!} \Xi_{0,4}(z,z,z,z)
= \frac{1}{2} \cdot \frac{7}{24z^6} + \frac{1}{24} \cdot \frac{4}{z^6}
= \frac{5}{16 z^6}.
\een
Choose $z= u^{1/2}$ or $z = - u^{1/2}$,
one then expresses $S_n$ in the $u$-coordinates.
For example,
\bea
&& S_0 =\pm  \frac{1}{3} (2u)^{3/2},  \label{eqn:S0} \\
&& S_1 = - \frac{1}{4} \log (2u) + constant,  \label{eqn:S1}\\
&& S_2 = \pm \frac{5}{24(2u)^{3/2}},  \label{eqn:S2} \\
&& S_3 = \frac{5}{16} \frac{1}{(2u)^3}.  \label{eqn:S3}
\eea

Recall
\be
Z = \exp \sum_{n=0}^\infty \hbar^{n-1} S_n.
\ee
Therefore,
\ben
\hat{A}Z & = & (\half \hbar^2\pd_u^2 - u) \exp \sum_{n=0}^\infty \hbar^{n-1} S_n \\
& = & \half \hbar \pd_u \biggl( \sum_{n=0}^\infty \hbar^n \pd_u S_n \cdot
\exp \sum_{n=0}^\infty \hbar^{n-1} S_n \biggr)
- u \cdot\exp \sum_{n=0}^\infty \hbar^{n-1} S_n \\
& = &  \biggl( \half  \biggl( \sum_{n=0}^\infty \hbar^n \pd_u S_n\biggr)^2
+ \half \sum_{n=0}^\infty \hbar^{n+1} \pd_u^2 S_n
- u \biggr) \cdot\exp \sum_{n=0}^\infty \hbar^{n-1} S_n \\
& = & \biggl((\half (\pd_uS_0)^2- u) + \sum_{n=1}^\infty \hbar^n (\half \pd_u^2 S_{n-1}
+ \half \sum_{i+j=n} \pd_uS_i \cdot \pd_u S_j) \biggr) \cdot Z.
\een
It follows that the equation
\be
\hat{A} Z = 0
\ee
is equivalent to the following sequence of equations:
\bea
&& \half (\pd_u S_0)^2 = u, \label{eqn:Order0} \\
&& \half \pd_u^2S_0 + \pd_u S_0 \cdot \pd_u S_1 = 0,  \label{eqn:Order1} \\
&& \half \pd_u^2 S_1 + \pd_u S_0 \cdot \pd_u S_2 + \half \pd_u S_1 \cdot \pd_u S_1 = 0,
\label{eqn:Order2} \\
&& \half \pd_u^2 S_{n-1} + \pd_u S_0 \cdot \pd_u S_n
+ \pd_u S_1 \cdot \pd_u S_{n-1}
+ \half \sum_{\substack{i+j=n\\i,j\geq 2}} \pd_u S_i \cdot \pd_u S_j = 0,
\label{eqn:OrderN>2}
\eea
where $n > 2$.
One can plug in \eqref{eqn:S0}-\eqref{eqn:S2} to check that
\eqref{eqn:Order0}-\eqref{eqn:Order2} hold
and one can rewrite \eqref{eqn:OrderN>2} as follows:
\be \label{eqn:eqn:OrderN>=3}
\pd_u S_n = \pm \frac{1}{2(2u)^{1/2}} \biggl(- \pd_u^2 S_{n-1}
+ \frac{1}{2u} \pd_u S_{n-1}
- \sum_{\substack{i+j=n\\i,j \geq 2}} \pd_u S_i \cdot \pd_u S_j \biggr).
\ee
It suffices to prove this for $n \geq 3$.

\subsection{The reformulation in $w$-coordinates}

Because
\ben
&& w(z) = \frac{1}{z^2},
\een
we have
\ben
&& w = \frac{1}{2u}, \\
&& \pd_u = - 2 w^2 \pd_w, \\
&& \pd_u^2 = 4 w^4 \pd_w^2 + 8 w^3 \pd_w.
\een
Hence one can rewrite \eqref{eqn:eqn:OrderN>=3}
\be \label{eqn:eqn:OrderN>=3}
\pd_u S_n = \pm \frac{1}{2(2u)^{1/2}} \biggl(- \pd_u^2 S_{n-1}
+ \frac{1}{2u} \pd_u S_{n-1}
- \sum_{\substack{i+j=n\\i,j \geq 2}} \pd_u S_i \cdot \pd_u S_j \biggr).
\ee
as follows:
\ben
&&  - 2 w^2 \pd_w S_n = \pm \frac{w^{1/2}}{2}
\biggl(- (4 w^4 \pd_w^2 + 8 w^3 \pd_w) S_{n-1}\\
& - &  \frac{w}{2} \cdot 2w^2\pd_w S_{n-1}
- \sum_{\substack{i+j=n\\i,j \geq 2}} 2w^2 \pd_wS_i \cdot
2w^2\pd_w S_j \biggr).
\een
After simplification one gets
\be
\pd_w S_n = \pm\biggl( \frac{5}{2}w^{3/2}\pd_w S_{n-1}
+ w^{5/2}\cdot \pd_w^2S_{n-1}
+ w^{5/2} \sum_{\substack{i+j=n\\i,j\geq 2}} \pd_w S_i \cdot \pd_wS_j \biggr),
\ee
or equivalently,
\be \label{eqn:PartialS}
w^{5/2} \pd_w S_n
= \pm \biggl((w^{5/2}\pd_w)^2  S_{n-1}
+ \sum_{\substack{i+j=n\\i,j\geq 2}} w^{5/2} \pd_w S_i \cdot w^{5/2} \pd_wS_j
\biggr).
\ee

\begin{remark}
Let $t = -\frac{2}{3} w^{-3/2}$,
then one has
\be
\pd_w = \frac{\pd t}{\pd w} \pd_t = w^{-5/2} \pd_t,
\ee
and so
\be
\pd_t S_n = \pd_t^2 S_{n-1}
+ \sum_{\substack{i+j=n \\i, j \geq 2}} \pd_t S_i \cdot \pd_t S_j.
\ee
\end{remark}

\subsection{The proof of Theorem }
It suffices now to establish \eqref{eqn:PartialS}.
By comparing with \eqref{Def:Omega},
we have
\be
\Omega_{g,n}(w_1, \dots, w_n) = (\mp 1)^n \Xi_{g,n}(z_1, \dots ,z_n),
\ee
and so
\be \label{eqn:SinOmega}
\begin{split}
S_n = &
\sum_{2g-1+k=n} \frac{(-1)^k(\mp1)^k}{k!} \Omega_{g,k}(w, \dots, w) \\
= & (\pm 1)^{n+1}  \sum_{2g-1+k=n} \frac{1}{k!} \Omega_{g,k}(w, \dots, w).
\end{split}
\ee

By \eqref{eqn:SinOmega},
\ben
w^{5/2} \pd_w S_n
= (\pm 1)^{n+1} \sum_{2g-1+k=n} \frac{1}{(k-1)!} w^{5/2}\pd_w \Omega_{g,k}(w, \dots, w),
\een
and
\ben
&& (w^{5/2}\pd_w)^2 S_n
= (\pm1)^{n+1}\sum_{2g-1+k=n} \frac{1}{(k-1)!} (x^{5/2}\pd_x)^2 \Omega_{g,k}(x,w, \dots, w)|_{x=w} \\
& + & (\pm1)^{n+1}\sum_{2g-1+k=n} \frac{1}{(k-2)!} x^{5/2}\pd_x
y^{5/2}\pd_y \Omega_{g,k}(x,y,w, \dots, w)|_{x=y=w}.
\een
Hence \eqref{eqn:PartialS} can be rewritten as follows
(after taking care of the $\pm$ signs):
\be \label{eqn:MainInW}
\begin{split}
& \sum_{2g-1+k=n} \frac{1}{(k-1)!} w^{5/2}\pd_w \Omega_{g,k}(w, \dots, w) \\
= &
 \sum_{2g-1+k=n-1} \frac{1}{(k-1)!} (x^{5/2}\pd_x)^2 \Omega_{g,k}(x,w, \dots, w)|_{x=w} \\
 + & \sum_{2g-1+k=n-1} \frac{1}{(k-2)!} x^{5/2}\pd_x y^{5/2}\pd_y \Omega_{g,k}(x,y,w, \dots, w)|_{x=y=w}
\biggr) \\
 + & w^{5/2} \sum_{\substack{i+j=n\\i,j\geq 2}}\sum_{2g_1-1+k_1=i}
\frac{1}{(k_1-1)!}w^{5/2} \pd_w \Omega_{g_1,k_1}(w, \dots, w)\\
& \cdot \sum_{2g_2-1+k_2=j} \frac{1}{(k_2-1)!}
w^{5/2}\pd_w \Omega_{g_2,k_2}(w, \dots, w).
\end{split}
\ee
We now show that this can be derived from \eqref{eqn:OmegaRec}.
We first set $n=k-1$ on both sides of \eqref{eqn:OmegaRec} to get:
\be \label{eqn:OmegaRecK}
\begin{split}
& \omega_0^{5/2} \pd_{w_0} \Omega_{g,k}(w_0,w_1, \dots, w_{k-1}) \\
 = & x^{5/2} \pd_x y^{5/2}\pd_y \Omega_{g-1,k+1}(x,y,w_{[k-1]})|_{x=y=w_0} \\
+ & \sum^s_{\substack{g_1+g_2=g \\ A_1 \coprod A_2 = [k-1]}}
w_0^{5/2}\pd_{w_0} \Omega_{g_1, |A_1|+1}(w_0,w_{A_1})
\cdot w_0^{5/2}\pd_{w_0} \Omega_{g_2, |A_2|+1}(w_0, w_{A_2}) \\
+ & \sum_{i=1}^{k-1} w_0\cD_{w_0,w_i} \pd_x \Omega_{g,k-1}(x, w_{[k-1]_i}).
\end{split}
\ee
We then set $w_0=\cdots = w_{k-1} = w$ on both sides of \eqref{eqn:OmegaRecK}.
The left-hand side becomes:
\ben
&& w^{5/2} \pd_w \Omega_{g,k}(w, \dots, w).
\een
The first line on the right-hand side becomes:
\ben
&& x^{5/2} \pd_xy^{5/2}\pd_y \Omega_{g-1,k+1}(x,y,w, \dots, w)|_{x=y=w}.
\een
The second line on the right-hand side becomes:
\ben
&&  w^{5/2}
\sum^s_{\substack{g_1+g_2=g \\ k_1 + k_2 = k+1}} \frac{(k-1)!}{k_1!k_2!}
\pd_{w} \Omega_{g_1, k_1}(w,\dots,w)
\cdot \pd_{w} \Omega_{g_2, k_2}(w_0, \dots, w).
\een
Now we deal with the third line on the right-hand side.
Recall
\be
\cD_{u,v} x^{a-1/2} = uv^{1/2}(u^{a+1} + u^av + \cdots + v^{a+1}).
\ee
So we have
\ben
w_0 \cD_{w_0,w_i} \pd_x x^{a+1/2}
& = & w_0 \cD_{w_0,w_i} (a+1/2) x^{a-1/2} \\
& = & (a+1/2) w_0 \cdot w_0w_i^{1/2}(w_0^{a+1}+w_0^aw_i + \cdots +w_i^{a+1}),
\een
it follows that
\ben
w_0 \cD_{w_0,w_i} \pd_x x^{a+1/2}|_{w_0=w_i=w}
& = & (a+2) (a+1/2) w \cdot ww^{1/2} w^{a+1} \\
& = & (a+2)(a+1/2) w^{a+7/2}.
\een
On the other hand
\ben
(w^{5/2}\pd_w)^2w^{a+1/2}
& = & w^{5/2}\pd_w(w^{5/2} \cdot (a+1/2) w^{a-1/2}) \\
& = & (a+1/2) w^{5/2} \pd_w w^{a+2} \\
& = & (a+1/2)(a+2) w^{5/2} w^{a+1} \\
& = & (a+1/2)(a+2) w^{a+7/2}.
\een
Hence we get:
\ben
&& \sum_{i=1}^{k-1} w_0\cD_{w_0,w_i} \pd_x \Omega_{g,k-1}(x, w_{[k-1]_i})|_{w_0=\cdots = w_{k-1} = w} \\
& = & (k-1) (x^{5/2}\pd_x)^2 \Omega_{g,k-1}(x,w,\dots, w)|_{x=w}.
\een
To summarize,
we have obtained the following identity:
\ben
&& w^{5/2} \pd_w \Omega_{g,k}(w, \dots, w) \\
&= & x^{5/2} \pd_xy^{5/2}\pd_y \Omega_{g-1,k+1}(x,y,w, \dots, w)|_{x=y=w} \\
& + &  w^{5/2}
\sum^s_{\substack{g_1+g_2=g \\ k_1 + k_2 = k+1}} \frac{(k-1)!}{k_1!k_2!}
\pd_{w} \Omega_{g_1, k_1}(w,\dots,w)
\cdot \pd_{w} \Omega_{g_2, k_2}(w_0, \dots, w) \\
& + & (k-1) (x^{5/2}\pd_x)^2 \Omega_{g,k-1}(x,w,\dots, w)|_{x=w}.
\een
Dividing both sides by $\frac{1}{(k-1)!}$ and take $\sum_{2g-1+k=n}$,
one gets \eqref{eqn:MainInW}.
This completes the proof.

\vspace{.1in}
{\em Acknowledgements}.
The author is partially supported by  NSFC grant 1171174.

\end{document}